\newtheorem{thm}{Theorem}
\newtheorem*{thm*}{Theorem}
\newtheorem{lem}{Lemma}
\newcommand{\as}{\text{ as }}
\renewcommand{\Re}{\text{Re \;}} \renewcommand{\Im}{\text{Im \;}}
\renewcommand{\l}{\left}
\renewcommand{\r}{\right}
\newcommand{\tr}{\text{tr}}
\begin{document}
\title[Weyl-Titchmarsh formula]
{Weyl-Titchmarsh type formula for periodic Schr\"odinger operator with Wigner-von Neumann potential}

\author{Pavel Kurasov}
    \address{Department of Mathematics, Lund Institute of Technology, 221 00 Lund, Sweden,}
    \email{kurasov@maths.lth.se}

\author{Sergey Simonov}
    \address{Department of Mathematical Physics, Institute of Physics, St. Petersburg University,
    Ulianovskaia 1, St. Petergoff, St. Petersburg, Russia, 198904}
    \email{sergey\_simonov@mail.ru}

\subjclass{47E05,34B20,34L40,34L20,34E10} \keywords{Asymptotics of
generalized eigenvectors, Weyl-Titchmarsh
theory, Schr\"odinger operator, Wigner-von Neumann
potential}
\date{}

\begin{abstract}
Schr\"odinger operator on the half-line with periodic background
potential perturbed by a certain potential of Wigner-von Neumann
type is considered. The asymptotics of generalized eigenvectors
for $\lambda\in\mathbb C_+$ and on the absolutely continuous
spectrum is established. Weyl-Titchmarsh type formula for this
operator is proven.
\end{abstract}

\maketitle

\section{Introduction}
Consider the one dimensional Schr\"odinger operator with the real
potential which can be represented as a sum of three terms: a
certain periodic function, Wigner-von Neumann potential and a
certain absolutely integrable function. More precisely, let $q$ be
a real periodic function with period $a$ such that $q\in L_1(0;a)$
and let $q_1\in L_1(\mathbb R_+)$. Then the Schr\"odinger operator
$ \mathcal L_\alpha $ is defined by the differential expression
\begin{equation}\label{eq L-alpha}
    \mathcal L_{\alpha}:=-\frac{d^2}{dx^2}
    +q(x)+\frac{c\sin(2\omega x+\delta)}{(x+1)^{\gamma}}+q_1(x),
\end{equation}
on the set of functions satisfying the boundary condition
\begin{equation}\label{eq boundary condition}
    \psi(0)\cos\alpha-\psi'(0)\sin\alpha=0,
\end{equation}
where $c,\omega,\delta,\in\mathbb R$, $ \alpha \in [0, \pi)$,
$\gamma\in\l(\frac12;1\r]$. As it was shown by the first author
and Naboko in \cite{Kurasov-Naboko-07}, the absolutely continuous
spectrum of this operator has multiplicity one and coincides as a
set with the spectrum of the corresponding periodic operator on
$\mathbb R$,
\begin{equation}\label{eq L-per}
    \mathcal L_{per}=-\frac{d^2}{dx^2}+q(x).
\end{equation}
Note that the spectrum of $ \mathcal L_{per} $ has multiplicity
two. Let $\psi_+(x,\lambda)$ and $\psi_-(x,\lambda)$ be Bloch
solutions for $\mathcal L_{per}$ and $\varphi_{\alpha}(x,\lambda)$
be the solution of Cauchy problem
\begin{equation*}
    \begin{array}{l}
    -\varphi_{\alpha}''(x,\lambda)
    +\l(q(x)+\frac{c\sin(2\omega x+\delta)}{(x+1)^{\gamma}}+q_1(x)\r)\varphi_{\alpha}(x,\lambda)
    =\lambda\varphi_{\alpha}(x,\lambda),
    \\
    \varphi_{\alpha}(0,\lambda)=\sin\alpha,
    \\
    \varphi'_{\alpha}(0,\lambda)=\cos\alpha.
    \end{array}
\end{equation*}
The main result of the present paper is the following theorem that
relates the spectral density $\rho'_{\alpha}$ of the operator
$\mathcal L_{\alpha}$ and the asymptotics of the solution
$\varphi_{\alpha}$. We call it the Weyl-Titchmarsh formula.

    \begin{thm}\label{thm main result in introduction}
    Let $\frac{2a\omega}{\pi}\notin\mathbb Z$ and $q_1\in L_1(\mathbb
    R_+)$, then
    for almost all $\lambda\in\sigma(\mathcal L_{per})$ there exists $A_{\alpha}(\lambda)$ such that
    \begin{equation}\label{eq asymptotics of phi-alpha}
        \varphi_{\alpha}(x,\lambda)=A_{\alpha}(\lambda)\psi_-(x,\lambda)+
        \overline{A_{\alpha}(\lambda)}\psi_+(x,\lambda)+o(1)\as x\rightarrow\infty
    \end{equation}
    and
    \begin{equation*}
        \rho'_{\alpha}(\lambda)=\frac1{2\pi|W(\psi_+(\lambda),\psi_-(\lambda))| \; |A_{\alpha}(\lambda)|^2}.
    \end{equation*}
    \end{thm}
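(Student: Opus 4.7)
My plan is to reduce the eigenvalue equation to a $2\times 2$ first-order system in the Bloch basis, integrate it asymptotically by a Harris-Lutz/Levinson-type argument, and then read off the Weyl-Titchmarsh $m$-function from the analogue of the same asymptotics in the upper half-plane. Concretely, for $\lambda$ in the interior of $\sigma(\mathcal L_{per})$ the Bloch solutions $\psi_\pm(\cdot,\lambda)$ form a fundamental system for $\mathcal L_{per}$ with $\psi_+=\overline{\psi_-}$. Writing $\varphi_\alpha(x,\lambda)=\sigma_+(x)\psi_+(x,\lambda)+\sigma_-(x)\psi_-(x,\lambda)$ with the usual variation-of-constants constraint, one obtains a first-order system $\sigma'(x)=V(x)M(x,\lambda)\sigma(x)$, where
\[
V(x):=\frac{c\sin(2\omega x+\delta)}{(x+1)^\gamma}+q_1(x)
\]
and the entries of $M$ are bounded $a$-periodic combinations of $1$ and $e^{\pm 2ik(\lambda)x}$, with $k(\lambda)$ the quasi-momentum of $\mathcal L_{per}$.

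\textbf{Asymptotics on the spectrum.} Expanding the $a$-periodic coefficients of $M$ in Fourier series and multiplying through by $V$, the system takes the form $\sigma'=(D(x,\lambda)+R(x,\lambda))\sigma$, where $D$ collects pure oscillations $e^{i\Omega_j(\lambda)x}/(x+1)^\gamma$ with $\Omega_j(\lambda)\in\{\pm 2\omega,\pm 2\omega\pm 2k(\lambda),\pm 2k(\lambda)\}+\tfrac{2\pi}a\mathbb Z$, while $R$ collects the contributions of $q_1$ and therefore belongs to $L^1(\mathbb R_+)$. The hypothesis $\tfrac{2a\omega}\pi\notin\mathbb Z$ ensures that no $\Omega_j(\lambda)$ vanishes identically in $\lambda$; in particular the set of $\lambda\in\sigma(\mathcal L_{per})$ for which some $\Omega_j(\lambda)=0$ is at most countable. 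For all remaining $\lambda$ a single Harris-Lutz transformation integrates the oscillatory part of $D$ by parts into an $L^1$ remainder -- this is where the assumption $\gamma>\tfrac12$ is essential, since the quadratic terms produced after integration decay only as $x^{-2\gamma}$. Levinson's theorem applied to the transformed system yields $\sigma_\pm(x)\to A_\pm(\lambda)$, which is exactly (\ref{eq asymptotics of phi-alpha}); reality of $\varphi_\alpha$ forces $A_+(\lambda)=\overline{A_-(\lambda)}$, and I set $A_\alpha(\lambda):=A_-(\lambda)$.

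\textbf{$m$-function and spectral density.} Carrying out the same asymptotic integration for $\lambda\in\mathbb C_+$, where $\psi_+\in L^2$ at $+\infty$ while $\psi_-\notin L^2$, produces a distinguished solution $\Psi(x,\lambda)=\psi_+(x,\lambda)(1+o(1))$ of $\mathcal L_\alpha u=\lambda u$, which must be the Weyl solution up to a scalar. The $m$-function is then determined by $\Psi(0,\lambda)$ and $\Psi'(0,\lambda)$; evaluating the constant Wronskians $W(\Psi,\varphi_\alpha)$ and $W(\Psi,\theta_\alpha)$ (with $\theta_\alpha$ the companion Cauchy solution with orthogonal initial data) at $x=+\infty$ via the asymptotics and at $x=0$ via the boundary conditions yields an explicit expression for $m_\alpha(\lambda)$ in terms of $A_\pm(\lambda)$ and $W(\psi_+,\psi_-)$. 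Substituting this into the Stieltjes inversion $\rho'_\alpha(\lambda)=\tfrac1\pi\lim_{\varepsilon\downarrow 0}\Im m_\alpha(\lambda+i\varepsilon)$ and using $A_+=\overline{A_-}$ delivers the claimed identity.

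\textbf{Main obstacle.} The technical heart of the argument is proving the second step in a form sufficiently uniform in $\lambda$ on compact subsets of $\sigma(\mathcal L_{per})$ that avoid the null set of resonant quasi-momenta. Near a resonance a small denominator $\Omega_j(\lambda)^{-1}$ appears in the Harris-Lutz transformation and a priori spoils uniform bounds, and controlling its effect -- together with justifying the interchange of $\lim_{\varepsilon\downarrow 0}$ with the half-plane asymptotic expansion in the third step -- is where the bulk of the work lies. All remaining ingredients are Wronskian manipulations and a standard application of Levinson's theorem.
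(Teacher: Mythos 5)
Your proposal follows essentially the same route as the paper: variation of constants in the Bloch basis, a Harris--Lutz transformation removing the resonant oscillatory terms away from the countable set of critical quasi-momenta (with the $O(x^{-2\gamma})\in L^1$ remainder requiring $\gamma>\tfrac12$), a Levinson-type theorem giving the asymptotics both on the spectrum and in $\mathbb C_+$, identification of $m_\alpha$ from the coefficient of the growing Bloch solution, and Stieltjes inversion combined with the Wronskian normalization. You also correctly single out the main technical burden -- uniformity of the estimates in $\lambda$ up to the real axis -- which the paper handles via the sectorial sets $U(\beta,\mu)$ and the uniform bounds of its Theorem 2 and Lemma 3.
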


Weyl-Titchmarsh formulas form an efficient  tool to study the
behavior of the spectral density. The absolutely continuous
spectrum of the operator $\mathcal L_{\alpha}$ contains infinitely
many critical (resonance) points (see \eqref{eq critical points})
where the type of the asymptotics of generalized eigenvectors
changes and is not given by a linear conbination of $\psi_+$ and
$\psi_-$ (as in \eqref{eq asymptotics of phi-alpha}). Precisely at
these points the embedded eigenvalues of $\mathcal L_{\alpha}$ may
occur. In the generic case no eigenvalue occurs, but it is natural
to suspect that the spectral density of $\mathcal L_{\alpha}$
vanishes at these points.

Vanishing of the spectral density divides the absolutely
continuous spectrum into independent parts and has a clear
physical meaning. This phenomenon is called pseudogap. In the
forthcoming paper we intend to study zeros of the spectral density
in more detail.

The study of Schr\"odinger operators with Wigner-von Neumann
potentials began from the classical paper
\cite{Wigner-von-Neumann-29} where it was observed for the first
time that the potential $ \displaystyle  \frac{c\sin(2\omega
x+\delta)}{x+1} $ may produce an eigenvalue inside the absolutely
continuous spectrum. Later on such operators attracted attention
of many authors
\cite{Albeverio-72},\cite{Matveev-73},\cite{Matveev-Skriganov-72},\cite{Buslaev-Matveev-70},\cite{Behncke-91-I},\cite{Behncke-91-II},\cite{Behncke-94},\cite{Kurasov-92},\cite{Kurasov-96},\cite{Hinton-Klaus-Shaw-91},\cite{Klaus-91}.
The phenomenon of this nature, an embedded eigenvalue ("bound
state in the continuum"), was even observed experimentally in
semiconductor geterostructures \cite{Capasso-et-al-92}.

Weyl-Titchmarsh formula for the spectral density in the case of
zero periodic background potential follows directly from the
results of \cite{Matveev-73}. This formula was proved once again
in \cite{Behncke-91-I} where the method of Harris-Lutz
transformations \cite{Harris-Lutz-75} was used. In the present
paper we also use a modification of this method. We would like to
mention that another one approach was suggested in
\cite{Brown-Eastham-McCormack-98}, but again in the case of zero
periodic background potential.

\section{Preliminaries}

The spectrum of $\mathcal L_{per}$ consists of infinitely many
intervals \cite[Theorem 2.3.1]{Eastham-73}
\begin{equation*}
    \sigma(\mathcal L_{per}):=\bigcup\limits_{n=0}^{\infty}([\lambda_{2n};\mu_{2n}]\cup[\mu_{2n+1};\lambda_{2n+1}]),
\end{equation*}
where
\begin{equation*}
    \lambda_0<\mu_0\le\mu_1<\lambda_1\le\lambda_2<\mu_2\le\mu_3<\lambda_2\le\lambda_4<...,
\end{equation*}
where $ \lambda_j $ and $ \mu_j $ are the eigenvalues of the
Schr\"odinger differential equation on the interval $ [0,a ] $
with periodic and antiperiodic boundary conditions.  Spectral
properties of $\mathcal L_{per}$ are related to the entire
function $D(\lambda)$ (discriminant) and the function $k(\lambda)$
(quasi-momentum)
\begin{equation*}
    k(\lambda):=-i\ln\l(\frac{\tr D(\lambda)+\sqrt{\tr^2D(\lambda)-4}}2\r).
\end{equation*}
We can choose the branch of $k(\lambda)$ so that (this follows
from the properties of $D(\lambda)$, see \cite[Theorem
2.3.1]{Eastham-73})
\begin{equation*}
    \begin{array}{c}
    k(\lambda_0)=0, k(\mu_0)=k(\mu_1)=\pi, k(\lambda_1)=k(\lambda_2)=2\pi,...,
    \\
    k(\lambda)\in\mathbb R,\text{ if }\lambda\in\sigma(\mathcal L_{per}),
    \\
    k(\lambda)\in\mathbb C_+,\text{ if }\lambda\in\mathbb C_+.
    \end{array}
\end{equation*}
The eigenfunction equation for $\mathcal L_{per}$,
\begin{equation*}
    -\psi''(x)+q(x)\psi(x)=\lambda\psi(x),
\end{equation*}
has two solutions (Bloch solutions) $\psi_+(x,\lambda)$ and
$\psi_-(x,\lambda)$ satisfying quasiperiodic conditions:
\begin{equation*}
    \begin{array}{l}
    \psi_+(x+a,\lambda)\equiv e^{ik(\lambda)}\psi_+(x,\lambda),
    \\
    \psi_-(x+a,\lambda)\equiv e^{-ik(\lambda)}\psi_-(x,\lambda).
    \end{array}
\end{equation*}
They are determined uniquely up to multiplication by coefficients depending on $\lambda$.
It is possible to choose these coefficients so that Bloch solutions have the following properties:
\begin{enumerate}
\item $\psi_+(x,\lambda),\psi_-(x,\lambda)$ for every $x\ge0$ and
their Wronskian $W(\psi_+(\lambda),\psi_-(\lambda))$ are analytic
functions of $\lambda$ in $\mathbb C_+$ and continuous up to
$\sigma(\mathcal L_{per})\backslash\{\lambda_n,\mu_n,n\ge0\}$.
\item For $\lambda\in\sigma(\mathcal
L_{per})\backslash\{\lambda_n,\mu_n,n\ge0\}$,
\begin{equation*}
    \psi_+(x,\lambda)\equiv\overline{\psi_-(x,\lambda)}.
\end{equation*}
\item The Wronskian does not have zeros and for
$\lambda\in\sigma(\mathcal
L_{per})\backslash\{\lambda_n,\mu_n,n\ge0\}$
\begin{equation*}
    W(\psi_+(\lambda),\psi_-(\lambda))\in i\mathbb R_+.
\end{equation*}
\end{enumerate}

Bloch solutions can also be written in the form
\begin{equation*}
    \begin{array}{l}
    \psi_+(x,\lambda)=e^{ik(\lambda)\frac xa}p_+(x,\lambda),
    \\
    \psi_-(x,\lambda)=e^{-ik(\lambda)\frac xa}p_-(x,\lambda),
    \end{array}
\end{equation*}
where the functions $p_+(x,\lambda)$ and $p_-(x,\lambda)$ have
period $a$ in the variable $x$ and the same properties as
$\psi_+(x,\lambda)$ and $\psi_-(x,\lambda)$ with respect to the
variable $\lambda$.

As we mentioned earlier, the operator $\mathcal L_{\alpha}$ was
studied in \cite{Kurasov-Naboko-07}, where the asymptotics of the
generalized eigenvectors was obtained. The authors showed that in
every zone of $\sigma(\mathcal L_{per})$ ($[\lambda_n;\mu_n]$ if
$n$ is even and $[\mu_n;\lambda_n]$ if $n$ is odd) there exist two
critical points $\lambda_n^+$ and $\lambda_n^-$ determined by the
equalities
\begin{equation}\label{eq critical points}
    \begin{array}{l}
    k(\lambda_n^+)=\pi\l(n+1-\l\{\frac{a\omega}{\pi}\r\}\r),
    \\
    k(\lambda_n^-)=\pi\l(n+\l\{\frac{a\omega}{\pi}\r\}\r).
    \end{array}
\end{equation}
They do not coincide with each other and with the ends of zones, if
\begin{equation}\label{eq condition on the frequency}
    \frac{2a\omega}{\pi}\notin\mathbb Z.
\end{equation}

\section{Reduction of the spectral equation to the discrete linear
system of Levinson form}

In this section we transform the eigenufnction equation for
$\mathcal L_{\alpha}$ to a linear $2\times2$ system with the
coefficient matrix being a sum of the diagonal and summable
matrices.

Consider the eigenfunction equation for $\mathcal L_{\alpha}$:
\begin{equation}\label{eq spectral equation 2 time}
        -\psi''(x)+\l(q(x)+\frac{c\sin(2\omega x+\delta)}{(x+1)^{\gamma}}+q_1(x)\r)\psi(x)
        =\lambda\psi(x).
\end{equation}
For every
\begin{equation*}
    \lambda\in\mathbb C_+\cup\l(\sigma(\mathcal L_{per})\backslash\{\lambda_n,\mu_n,n\ge0\}\r)
\end{equation*}
let us make the following substitution
\begin{equation}\label{eq u}
\begin{array}{c}
    \left(%
    \begin{array}{c}
    \psi(x) \\
    \psi'(x) \\
    \end{array}%
    \right)
    =
    \left(%
    \begin{array}{cc}
    \psi_-(x,\lambda) & \psi_+(x,\lambda) \\
    \psi_-'(x,\lambda) & \psi_+'(x,\lambda) \\
    \end{array}%
    \right)
    u(x),
    \\
    \text{or}
    \\
    u(x):=\frac1{W(\psi_+(\lambda),\psi_-(\lambda))}
    \left(%
    \begin{array}{cc}
    \psi_+'(x,\lambda) & -\psi_+(x,\lambda) \\
    -\psi_-'(x,\lambda) & \psi_-(x,\lambda) \\
    \end{array}%
    \right)
    \left(%
    \begin{array}{c}
    \psi(x) \\
    \psi'(x) \\
    \end{array}%
    \right).
    \end{array}
\end{equation}
 Writing \eqref{eq spectral equation 2 time} as
\begin{equation*}
    \left(%
    \begin{array}{c}
    \psi(x) \\
    \psi'(x) \\
    \end{array}%
    \right)'
    =\left(%
    \begin{array}{cc}
    0 & 1 \\
    q(x)+\frac{c\sin(2\omega x+\delta)}{(x+1)^{\gamma}}+q_1(x)-\lambda & 0 \\
    \end{array}%
    \right)
    \left(%
    \begin{array}{c}
    \psi(x) \\
    \psi'(x) \\
    \end{array}%
    \right)
\end{equation*}
and substituting \eqref{eq u} into it, we get:
\begin{equation}\label{eq system for u}
    u'(x)=\frac{\frac{c\sin(2\omega x+\delta)}{(x+1)^{\gamma}}+q_1(x)}{W(\psi_+(\lambda),\psi_-(\lambda))}
    \left(%
    \begin{array}{l}
    -\psi_+(x,\lambda)\psi_-(x,\lambda)\ \ \ \ \ -\psi_+^2(x,\lambda) \\
    \ \ \ \ \ \psi_-^2(x,\lambda)\ \ \ \ \ \ \ \psi_+(x,\lambda)\psi_-(x,\lambda) \\
    \end{array}%
    \right)
    u(x).
\end{equation}
Let us introduce another one vector valued function $ v $
\begin{equation}\label{eq v}
    \begin{array}{c}
    v(x):=
    \left(%
    \begin{array}{cc}
    e^{-ik(\lambda)\frac xa} & 0 \\
    0 & e^{ik(\lambda)\frac xa} \\
    \end{array}%
    \right)
    u(x),
    \\
    \text{ or }
    \\
    u(x)=
    \left(%
    \begin{array}{cc}
    e^{ik(\lambda)\frac xa} & 0 \\
    0 & e^{-ik(\lambda)\frac xa} \\
    \end{array}%
    \right)
    v(x)
    \end{array}
\end{equation}
and the matrix
\begin{equation}\label{eq R-(1)}
    R^{(1)}(x,\lambda):=\frac{q_1(x)}{W(\psi_+(\lambda),\psi_-(\lambda))}
    \left(%
    \begin{array}{l}
    -p_+(x,\lambda)p_-(x,\lambda)\ \ \ \ \ -p_+^2(x,\lambda) \\
    \ \ \ \ \ p_-^2(x,\lambda)\ \ \ \ \ \ \ p_+(x,\lambda)p_-(x,\lambda) \\
    \end{array}%
    \right).
\end{equation}
Then the system \eqref{eq system for u} is equivalent to
\begin{multline}\label{eq system for v}
    v'(x)=
    \l[
    \left(%
    \begin{array}{cc}
    -\frac{ik(\lambda)}a & 0 \\
    0 & \frac{ik(\lambda)}a \\
    \end{array}%
    \right)
    +\frac{c\sin(2\omega x+\delta)}{(x+1)^{\gamma}W(\psi_+(\lambda),\psi_-(\lambda))}\r.
    \\
    \l.\times
    \left(%
    \begin{array}{l}
    -p_+(x,\lambda)p_-(x,\lambda)\ \ \ \ \ -p_+^2(x,\lambda) \\
    \ \ \ \ \ p_-^2(x,\lambda)\ \ \ \ \ \ \ p_+(x,\lambda)p_-(x,\lambda) \\
    \end{array}%
    \right)
    +R^{(1)}(x,\lambda)\r]v(x).
\end{multline}
Let us search for a differentiable matrix-valued function $Q(x)$
such that $Q(x),Q'(x)=O\l(\frac1{x^{\gamma}}\r)$ as
$x\rightarrow\infty$ and such that the substitution
\begin{equation}\label{eq definition of widetilde-v}
    v(x)=e^{Q(x)}\widetilde v(x)
\end{equation}
leads to a system for the vector valued function $\widetilde v$ of
the form
\begin{multline*}
    \widetilde v'(x)=
    \l[
    \left(%
    \begin{array}{cc}
    -\frac{ik}a & 0 \\
    0 & \frac{ik}a \\
    \end{array}%
    \right)
    +\frac{c\sin(2\omega x+\delta)}{(x+1)^{\gamma}W(\psi_+,\psi_-)}\r.
    \\
    \l.\times
    \left(%
    \begin{array}{cc}
    -p_+(x)p_-(x) & 0 \\
    0 & p_+(x)p_-(x) \\
    \end{array}%
    \right)
    +R^{(2)}(x)\r]\widetilde v(x),
\end{multline*}
where the remainder $R^{(2)}(x)$ also belongs to $L_1(0,\infty)$.
Using that
\begin{equation*}
    \begin{array}{l}
    e^{\pm Q(x)}=I\pm Q(x)+O\l(\frac1{x^{2\gamma}}\r),
    \\
    \l(e^{\pm Q(x)}\r)'=\pm Q'(x)+O\l(\frac1{x^{2\gamma}}\r)
    \end{array}
\end{equation*}
as $x\rightarrow\infty$ we obtain
\begin{multline}\label{eq equation for cancellation with commutator}
    \widetilde v'(x)=
    \l[
    \left(%
    \begin{array}{cc}
    -\frac{ik}a & 0 \\
    0 & \frac{ik}a \\
    \end{array}%
    \right)
    +\frac{c\sin(2\omega x+\delta)}{(x+1)^{\gamma}W(\psi_+,\psi_-)}\r.
    \\
    \l.\times
    \left(%
    \begin{array}{cc}
    -p_+(x)p_-(x) & -p_+^2(x) \\
    p_-^2(x) & p_+(x)p_-(x) \\
    \end{array}%
    \right)
    -Q'(x)
    \r.
    \\
    \l.
    -\l[Q(x),
    \left(%
    \begin{array}{cc}
    -\frac{ik}a & 0 \\
    0 & \frac{ik}a \\
    \end{array}%
    \right)
    \r]
    +R^{(1)}(x)+O\l(\frac1{x^{2\gamma}}\r)\r]\widetilde v(x),
\end{multline}
where
\begin{equation*}
    \l[Q(x),
    \left(%
    \begin{array}{cc}
    -\frac{ik}a & 0 \\
    0 & \frac{ik}a \\
    \end{array}%
    \right)
    \r]
\end{equation*}
is the commutator of the two matrices. Our aim is to cancel the
anti-diagonal entries of
\begin{equation*}
    \left(%
    \begin{array}{cc}
    -p_+(x)p_-(x) & -p_+^2(x) \\
    p_-^2(x) & p_+(x)p_-(x) \\
    \end{array}%
    \right)
\end{equation*}
in \eqref{eq equation for cancellation with commutator} by
properly choosing $Q$. To this end $Q$ has to satisfy the
following equation:
\begin{equation}\label{eq equation on Q}
    Q'(x)+\l[Q(x),
    \left(%
    \begin{array}{cc}
    -\frac{ik}a & 0 \\
    0 & \frac{ik}a \\
    \end{array}%
    \right)
    \r]
    =\frac{c\sin(2\omega x+\delta)}{(x+1)^{\gamma}W(\psi_+,\psi_-)}
    \left(%
    \begin{array}{cc}
    0 & -p_+^2(x) \\
    p_-^2(x) & 0 \\
    \end{array}%
    \right).
\end{equation}
The latter is equivalent (after multiplication by $ \displaystyle
     \left(%
    \begin{array}{cc}
    e^{-ik\frac xa} & 0 \\
    0 & e^{ik\frac xa} \\
    \end{array}%
    \right)
$
from the right and by its inverse from the left) to
\begin{multline}\label{eq intermediate equation on Q}
    \l(
    \left(%
    \begin{array}{cc}
    e^{ik\frac xa} & 0 \\
    0 & e^{-ik\frac xa} \\
    \end{array}%
    \right)
    Q(x)
    \left(%
    \begin{array}{cc}
    e^{-ik\frac xa} & 0 \\
    0 & e^{ik\frac xa} \\
    \end{array}%
    \right)
    \r)'
    \\
    =\frac{c\sin(2\omega x+\delta)}{(x+1)^{\gamma}W(\psi_+,\psi_-)}
    \left(%
    \begin{array}{cc}
    0 & -p_+^2(x)e^{2ik\frac xa} \\
    p_-^2(x)e^{-2ik\frac xa} & 0 \\
    \end{array}%
    \right).
\end{multline}
For every
\begin{equation*}
    \mu\in\sigma(\mathcal L_{per})\backslash\{\lambda_n,\mu_n,\lambda_n^+,\lambda_n^-,n\ge0\}
\end{equation*}
and for the values of $\lambda$ from some neighbourhood of the
point $\mu$ (which we will specify later) let us take the
following solution of \eqref{eq intermediate equation on Q}:
\begin{multline*}
    \l(
    \left(%
    \begin{array}{cc}
    e^{ik\frac xa} & 0 \\
    0 & e^{-ik\frac xa} \\
    \end{array}%
    \right)
    Q(x,\lambda,\mu)
    \left(%
    \begin{array}{cc}
    e^{-ik\frac xa} & 0 \\
    0 & e^{ik\frac xa} \\
    \end{array}%
    \right)
    \r)
    =\frac c{W(\psi_+(\lambda),\psi_-(\lambda))}
    \\
    \times
    \left(%
    \begin{array}{l}
    \ \ \ \ \ \ \ \ \ \ \ \ \ \ \ \ \ \ \ \ \ \ \ \ \
    0
    \ \ \ \ \ \ \ \ \ \ \ \ \ \ \ \ \ \ \ \ \ \ \ \ \
    \int\limits_x^{\infty}\frac{\sin(2\omega t+\delta)p_+^2(t,\lambda)e^{2ik(\lambda)\frac ta}dt}{(t+1)^{\gamma}}
    \\
    \int\limits_0^x\frac{\sin(2\omega t+\delta)p_-^2(t,\lambda)e^{-2ik(\lambda)\frac ta}dt}{(t+1)^{\gamma}}
    -\int\limits_0^{\infty}\frac{\sin(2\omega t+\delta)p_-^2(t,\lambda)e^{-2ik(\mu)\frac ta}dt}{(t+1)^{\gamma}}
    \ \ \ \ \ \ \ \ \
    0
    \\
    \end{array}%
    \right)
\end{multline*}
(this is our choice of constants of integration that depend on $\mu$). This leads to
\begin{multline}\label{eq formula for Q}
    Q(x,\lambda,\mu):=\frac c{W(\psi_+(\lambda),\psi_-(\lambda))}
    \\
    \times
    \left(%
    \begin{array}{l}
    \ \ \ \ \ \ \ \ \ \ \ \ \ \ \ \ \ \ \ \
    0
    \ \ \ \ \ \ \ \ \ \ \ \ \ \ \ \ \ \ \ \
    e^{-2ik(\lambda)\frac xa}
    \int\limits_x^{\infty}\frac{\sin(2\omega t+\delta)p_+^2(t,\lambda)e^{2ik(\lambda)\frac ta}dt}{(t+1)^{\gamma}}
    \\
    e^{2ik(\lambda)\frac xa}
    \l(
    \int\limits_0^x\frac{\sin(2\omega t+\delta)p_-^2(t,\lambda)e^{-2ik(\lambda)\frac ta}dt}{(t+1)^{\gamma}}
    -\int\limits_0^{\infty}\frac{\sin(2\omega t+\delta)p_-^2(t,\lambda)e^{-2ik(\mu)\frac ta}dt}{(t+1)^{\gamma}}
    \r)
    \
    0
    \\
    \end{array}%
    \right).
\end{multline}
In particular, for $\lambda=\mu$,
\begin{multline}\label{eq formula for Q on the real line}
    Q(x,\mu,\mu)=\frac c{W(\psi_+(\mu),\psi_-(\mu))}
    \\
    \times
    \left(%
    \begin{array}{cc}
        0
        &
        e^{-2ik(\mu)\frac xa}
        \int\limits_x^{\infty}\frac{\sin(2\omega t+\delta)\psi_+^2(t,\mu)dt}{(t+1)^{\gamma}}
        \\
        -e^{2ik(\mu)\frac xa}
        \int\limits_x^{\infty}\frac{\sin(2\omega t+\delta)\psi_-^2(t,\mu)dt}{(t+1)^{\gamma}}
        &
        0
        \\
    \end{array}%
    \right).
\end{multline}
Formula \eqref{eq formula for Q on the real line} does not make
sense if $\mu\in\mathbb C_+$ due to the divergence of the integral
in the lower entry. But it has analytic continuation in its second
argument from the point $\mu$.

Let us denote
\begin{equation*}
    \varepsilon(\mu):=\frac12\min_{n\in\mathbb Z}
    \l\{\l|\frac{2k(\mu)}a+2\omega+\frac{2\pi n}a\r|,\l|\frac{2k(\mu)}a-2\omega+\frac{2\pi n}a\r|\r\}.
\end{equation*}
Consider some $\beta>0$ and the set
\begin{multline*}
    U(\beta,\mu):=\{\lambda\in\overline{\mathbb C_+}:2\varepsilon(\lambda)\ge\varepsilon(\mu),
    0\le\Im 2k(\lambda)/a\le1,
    \\
    |\Re  k(\lambda)-k(\mu)|\le\beta\Im  k(\lambda)\}.
\end{multline*}
The set $U(\beta,\mu)$ is compact and contains the point $\mu$.  For every $\beta_1<\beta$
it contains some neighbourhood of the vertex of the sector
\begin{equation*}
    |\Re \lambda-\mu|\le\frac{\beta_1}{k'(\mu)}\Im \lambda.
\end{equation*}
Note that $k'(\mu)$ is positive for
$\mu\in\sigma(\mathcal L_{per})\backslash\{\lambda_n,\mu_n,n\ge0\}$.

    \begin{thm}\label{thm about Q}
    Let $\beta>0$ and
    \begin{equation*}
    \mu\in\sigma(\mathcal L_{per})\backslash\{\lambda_n,\mu_n,\lambda_n^+,\lambda_n^-,n\ge0\}.
    \end{equation*}
    Then there exists $c_1(\beta,\mu,\gamma)$ such that for every $x\ge0$ and $\lambda\in U(\beta,\mu)$
    holds:
    \begin{equation*}
        \|Q(x,\lambda,\mu)\|,\|Q'(x,\lambda,\mu)\|<\frac{c_1(\beta,\mu,\gamma)}{(x+1)^{\gamma}}.
    \end{equation*}
    \end{thm}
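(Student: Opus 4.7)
The plan is to estimate the off-diagonal entries of $Q(x,\lambda,\mu)$ in \eqref{eq formula for Q} directly by Fourier expansion and integration by parts, leveraging that $p_\pm(\cdot,\lambda)$ are $C^1$-periodic with period $a$, so that the products $\sin(2\omega t+\delta)p_\pm^2(t,\lambda)$ admit absolutely convergent Fourier expansions $\sum_n c_n^\pm(\lambda)e^{i\sigma_n t}$ with $\sigma_n\in\{\pm 2\omega+2\pi m/a\}_{m\in\mathbb Z}$, and the coefficients $c_n^\pm(\lambda)$ together with $1/W(\psi_+(\lambda),\psi_-(\lambda))$ are uniformly bounded on the compact set $U(\beta,\mu)$. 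Since $\mu$ is not a critical point, $\varepsilon(\mu)>0$, and the condition $2\varepsilon(\lambda)\geq\varepsilon(\mu)$ in the definition of $U(\beta,\mu)$ guarantees that the frequencies $\sigma_n+2k(\lambda)/a$, $\sigma_n-2k(\lambda)/a$, and $\sigma_n-2k(\mu)/a$ appearing after multiplying by the exponentials have real parts bounded below in absolute value by $\varepsilon(\mu)/2$ uniformly on $U(\beta,\mu)$.

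For the $(1,2)$-entry, each Fourier term yields $\int_x^\infty e^{i\alpha t}/(t+1)^\gamma\,dt$ with $\Im\alpha=2\Im k(\lambda)/a\geq 0$ and $|\Re\alpha|\geq\varepsilon(\mu)/2$. A single integration by parts produces the bound $C\,e^{-(\Im\alpha)x}/(x+1)^\gamma$, and multiplication by the outer factor $e^{-2ik(\lambda)x/a}$, whose modulus is precisely $e^{(\Im\alpha)x}$, cancels the exponential exactly, so $|Q_{12}(x,\lambda,\mu)|\leq C(\beta,\mu,\gamma)/(x+1)^\gamma$ after summing absolutely convergent Fourier series.

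The $(2,1)$-entry is the delicate one. Set $f(t,\lambda):=\sin(2\omega t+\delta)p_-^2(t,\lambda)/(t+1)^\gamma$ and rewrite
\begin{equation*}
    \int_0^x\!f(t,\lambda)e^{-2ik(\lambda)t/a}\,dt-\int_0^\infty\!f(t,\lambda)e^{-2ik(\mu)t/a}\,dt=-\int_x^\infty\!f(t,\lambda)e^{-2ik(\mu)t/a}\,dt+\int_0^x\!f(t,\lambda)\l[e^{-2ik(\lambda)t/a}-e^{-2ik(\mu)t/a}\r]dt.
\end{equation*}
The first integral on the right is purely oscillatory (since $k(\mu)\in\R$) with frequencies at distance $\geq\varepsilon(\mu)$ from zero, so integration by parts gives $O(1/(x+1)^\gamma)$, preserved under multiplication by $|e^{2ik(\lambda)x/a}|\leq 1$. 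For the second integral we introduce the primitive $P(t,\nu):=\int_0^t\sin(2\omega s+\delta)p_-^2(s,\lambda)e^{-2i\nu s/a}\,ds$; Fourier expansion immediately gives $|P(t,k(\lambda))|\leq C e^{2\Im k(\lambda)t/a}$ and, via the mean value formula $|e^{i\alpha t}-e^{i\beta t}|\leq|\alpha-\beta|\,t\,\max(1,e^{2\Im k(\lambda)t/a})$, the refined bound $|P(t,k(\lambda))-P(t,k(\mu))|\leq C|k(\lambda)-k(\mu)|(t+1)e^{2\Im k(\lambda)t/a}$. Integrating $f\,[e^{-2ik(\lambda)t/a}-e^{-2ik(\mu)t/a}]=(1/(t+1)^\gamma)\,d[P(t,k(\lambda))-P(t,k(\mu))]$ by parts, the boundary term at $x$ is $O(1/(x+1)^\gamma)$ directly (the external $e^{2ik(\lambda)x/a}$ cancels the growth of $P$), and the remainder is a convolution-type integral $e^{-2\Im k(\lambda)x/a}\int_0^x\frac{|k(\lambda)-k(\mu)|\,e^{2\Im k(\lambda)t/a}}{(t+1)^\gamma}\,dt$. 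Here the Stolz-type condition $|\Re k(\lambda)-k(\mu)|\leq\beta\Im k(\lambda)$ built into $U(\beta,\mu)$ yields $|k(\lambda)-k(\mu)|\leq\sqrt{1+\beta^2}\,\Im k(\lambda)$, and combined with the elementary inequality $\sup_{s\geq 0}s\,e^{-2sx/a}\leq a/(2ex)$ and a routine splitting of the convolution at $t=x/2$, one obtains the uniform bound $C(\beta,\mu,\gamma)/(x+1)^\gamma$, using $\gamma\in(1/2,1]$ to absorb residual $(x/2+1)^{1-\gamma}$ factors.

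Finally, the bound on $\|Q'(x,\lambda,\mu)\|$ is immediate from the defining relation \eqref{eq equation on Q}: $Q'(x)=F(x)-[Q(x),\,\mathrm{diag}(-ik(\lambda)/a,\,ik(\lambda)/a)]$, where $\|F(x)\|\leq C/(x+1)^\gamma$ by the explicit form and the commutator is bounded by $2|k(\lambda)|/a\cdot\|Q(x)\|$, with $|k(\lambda)|/a$ bounded on the compact $U(\beta,\mu)$. The principal obstacle throughout is the $(2,1)$-entry for $\lambda\neq\mu$ near the real spectrum: there is no natural exponential decay, and the required $1/(x+1)^\gamma$ rate emerges only after combining the subtraction of the $k(\mu)$-regularized integral (which cancels the non-decaying boundary contribution at $t=0$) with the Stolz-type approach geometry encoded in $U(\beta,\mu)$.
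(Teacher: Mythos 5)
Your overall architecture coincides with the paper's: the $(1,2)$ entry and the tail part of the $(2,1)$ entry are handled exactly as in the paper's Lemma~\ref{lem estimate of the tail} (Fourier expansion of the periodic factor, one integration by parts per mode, all frequencies kept away from zero by the condition $2\varepsilon(\lambda)\ge\varepsilon(\mu)$ built into $U(\beta,\mu)$); the $(2,1)$ entry is split into the same two pieces $Q_{21}^{I}+Q_{21}^{II}$; and the bound on $Q'$ is read off from the defining equation, as in the paper. The one place where you genuinely diverge is the difference integral $e^{2ik(\lambda)x/a}\int_0^x f(t)\bigl[e^{-2ik(\lambda)t/a}-e^{-2ik(\mu)t/a}\bigr]dt$ (the paper's Lemma~\ref{lem estimate of the beginning}): the paper integrates by parts in the $(t+1)^{-\gamma}$ factor and then splits the resulting integral into three regions $[0,1/\Im\xi_1]$, $[1/\Im\xi_1,x/2]$, $[x/2,x]$, whereas you integrate against the primitive $P$ of the oscillating factor and split at $x/2$ only. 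Both routes rest on the same three ingredients --- the Stolz-type condition $|k(\lambda)-k(\mu)|\le\sqrt{1+\beta^2}\,\Im k(\lambda)$, nonresonance, and $\sup_{s\ge 0}s^{\gamma}e^{-sx}\le C x^{-\gamma}$ --- so the variant buys no extra generality, only somewhat lighter bookkeeping.

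Two steps are compressed to the point of needing repair. First, the key bound $|P(t,k(\lambda))-P(t,k(\mu))|\le C|k(\lambda)-k(\mu)|(t+1)e^{2\Im k(\lambda)t/a}$ does \emph{not} follow from applying the mean-value inequality pointwise under the integral sign: that only yields $C|k(\lambda)-k(\mu)|\,t^{2}e^{2\Im k(\lambda)t/a}$, and the extra power of $t$ is fatal (your final convolution estimate would then give only $O\bigl((x+1)^{-\gamma/2}\bigr)$, since $\sup_h h^{\gamma/2}e^{-hx/2}\sim x^{-\gamma/2}$). To obtain the linear factor $(t+1)$ you must first Fourier-expand, write each mode's primitive explicitly as $(e^{i\alpha t}-1)/(i\alpha)$, and only then compare it with $(e^{i\beta t}-1)/(i\beta)$, applying the mean-value inequality to the numerators and the nonresonance bound $|\alpha|,|\beta|\ge\varepsilon(\mu)/2$ to the denominators; this is exactly where the oscillation enters and it must be said. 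Second, ``the coefficients are uniformly bounded on the compact set'' is not sufficient: you need uniform \emph{summability} of $\sum_n|c_n^{\pm}(\lambda)|$ over $U(\beta,\mu)$, which the paper obtains by integrating by parts twice, $|b_n(\lambda)|\le Cn^{-2}$, controlling $(p_{\pm}^2)''$ through the equation $\psi_{\pm}''=(q-\lambda)\psi_{\pm}$. With these two points made explicit, your argument closes.
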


\begin{proof}
Note first that
\begin{equation*}
    k(\lambda)\in\mathbb R\text{ for }\lambda\in\sigma(\mathcal L_{per})
\end{equation*}
and
\begin{equation*}
    k(\lambda)\in\mathbb C_+\text{ for }\lambda\in\mathbb C_+.
\end{equation*}

Let us denote the entries of $ Q(x, \lambda, \mu) $ as follows
\begin{equation*}
    Q(x,\lambda,\mu)=
    \left(%
    \begin{array}{cc}
    0 & Q_{12}(x,\lambda) \\
    Q_{21}(x,\lambda,\mu) & 0 \\
    \end{array}%
    \right).
\end{equation*}
Let us estimate first the entry $Q_{12}$. Let $f$ be a periodic
function with period $a$ such that $f\in L_1(0;a)$, its Fourier
coefficients will be denoted by $f_n$
\begin{equation*}
    f_n:=\frac1a\int_0^af(x)e^{-2\pi in\frac xa}dx.
\end{equation*}

    \begin{lem}\label{lem estimate of the tail}
    If
    \begin{equation*}
    \{f_n\}_{n=-\infty}^{+\infty}\in l^1(\mathbb Z)
    \end{equation*}
    and $\xi\in\overline{\mathbb C_+}$ is such that
    \begin{equation*}
    \frac{a\xi}{2\pi}\notin\mathbb Z,
    \end{equation*}
    then
    \begin{equation}\label{eq estimate from lemma}
    \l|e^{-i\xi x}\int_x^{\infty}\frac{e^{i\xi t}f(t)dt}{(t+1)^{\gamma}}\r|
    \le2\l(\sum_{n=-\infty}^{+\infty}\frac{|f_n|}{\l|\xi-\frac{2\pi n}a\r|}\r)\frac1{(x+1)^{\gamma}}
    \end{equation}
    ({\it i.e.} the expression on the left-hand side exists and is estimated by the right-hand side).
    \end{lem}

\begin{proof}
Consider $x_1>x$. Since the Fourier series converges absolutely,
we have
\begin{equation}\label{eq using Fourier in lemma}
    e^{-i\xi x}\int_x^{x_1}\frac{e^{i\xi t}}{(t+1)^{\gamma}}
    \l(\sum_{n=-\infty}^{+\infty}f_ne^{2\pi n\frac ta}\r)dt
    =
    \sum_{n=-\infty}^{+\infty}f_ne^{-i\xi x}
    \int_x^{x_1}\frac{e^{i\l(\xi+\frac{2\pi n}a\r)t}}{(t+1)^{\gamma}}dt.
\end{equation}
Integrating by parts and estimating the absolute value we get
\begin{multline*}
    \l|e^{-i\xi x}\int_x^{x_1}\frac{e^{i\l(\xi+\frac{2\pi n}a\r)t}dt}{(t+1)^{\gamma}}\r|
    \le\frac1{\l|\xi+\frac{2\pi n}a\r|}
    \\
    \times
    \l(\frac1{(x+1)^{\gamma}}+\frac1{(x_1+1)^{\gamma}}+\gamma\int_x^{x_1}\frac{dt}{(t+1)^{\gamma+1}}\r)
    =\frac2{\l|\xi+\frac{2\pi n}a\r|(x+1)^{\gamma}}.
\end{multline*}
Substituting into \eqref{eq using Fourier in lemma} yields:
\begin{equation*}
    \l|e^{-i\xi x}\int_x^{x_1}\frac{e^{i\xi t}f(t)dt}{(t+1)^{\gamma}}\r|
    \le
    2\l(\sum_{n=-\infty}^{+\infty}\frac{|f_n|}{\l|\xi-\frac{2\pi n}a\r|}\r)\frac1{(x+1)^{\gamma}}.
\end{equation*}
By Cauchy's criterion the integral
\begin{equation*}
    \int_x^{\infty}\frac{e^{i\xi t}f(t)dt}{(t+1)^{\gamma}}
\end{equation*}
exists and the desired estimate \eqref{eq estimate from lemma} follows.
\end{proof}

Formula \eqref{eq formula for Q} implies
\begin{multline}\label{eq equality for Q-12}
    Q_{12}(x,\lambda)
    \\
    =\frac{ce^{2i\omega x+i\delta}}{2iW(\psi_+(\lambda),\psi_-(\lambda))}
    e^{-i\l(\frac{2k(\lambda)}a+2\omega\r)x}\int_x^{\infty}
    \frac{p_+^2(t,\lambda)e^{i\l(\frac{2k(\lambda)}a+2\omega\r)t}dt}{(t+1)^{\gamma}}
    \\
    -\frac{ce^{-2i\omega x-i\delta}}{2iW(\psi_+(\lambda),\psi_-(\lambda))}
    e^{-i\l(\frac{2k(\lambda)}a-2\omega\r)x}\int_x^{\infty}
    \frac{p_+^2(t,\lambda)e^{i\l(\frac{2k(\lambda)}a-2\omega\r)t}dt}{(t+1)^{\gamma}}.
\end{multline}
Denote the Fourier coefficients of the function $p_+^2(\cdot,\lambda)$ by $b_n(\lambda)$,
\begin{equation*}
    b_n(\lambda):=\frac1a\int_0^ap_+^2(x,\lambda)e^{-2\pi in\frac xa}dx.
\end{equation*}
Then Lemma \ref{lem estimate of the tail} applied to to \eqref{eq
equality for Q-12} gives
\begin{multline}\label{eq estimate for Q-12}
    |Q_{12}(x,\lambda)|
    \le
    \frac{|c|}{|W(\psi_+(\lambda),\psi_-(\lambda))|}
    \frac1{(x+1)^{\gamma}}
    \\
    \times
    \sum_{n=-\infty}^{+\infty}|b_n(\lambda)|
    \l(\frac1{\l|\frac{2k(\lambda)}a+2\omega+\frac{2\pi n}a\r|}+
    \frac1{\l|\frac{2k(\lambda)}a-2\omega+\frac{2\pi n}a\r|}\r)
    \\
    \le
    \frac{2|c|}{\varepsilon(\mu)|W(\psi_+(\lambda),\psi_-(\lambda))|}
    \l(\sum_{n=-\infty}^{+\infty}|b_n(\lambda)|\r)
    \frac1{(x+1)^{\gamma}}.
\end{multline}

Let us estimate now the entry $Q_{21}$. Formula \eqref{eq formula
for Q} implies
\begin{multline*}
    Q_{21}(x,\lambda,\mu)=\frac{ce^{2ik(\lambda)\frac xa}}{W(\psi_+(\lambda),\psi_-(\lambda))}
    \l(
    \int\limits_0^x\frac{\sin(2\omega t+\delta)p_-^2(t,\lambda)e^{-2ik(\lambda)\frac ta}dt}{(t+1)^{\gamma}}
    \r.
    \\
    \l.
    -\int\limits_0^{\infty}\frac{\sin(2\omega t+\delta)p_-^2(t,\lambda)e^{-2ik(\mu)\frac ta}dt}{(t+1)^{\gamma}}
    \r)
    \\
    =\frac{ce^{2ik(\lambda)\frac xa}}{W(\psi_+(\lambda),\psi_-(\lambda))}
    \int\limits_0^x\frac{\sin(2\omega t+\delta)p_-^2(t,\lambda)
    \l(e^{-2ik(\lambda)\frac ta}-e^{-2ik(\mu)\frac ta}\r)dt}{(t+1)^{\gamma}}
    \\
    -\frac{ce^{2ik(\lambda)\frac xa}}{W(\psi_+(\lambda),\psi_-(\lambda))}
    \int\limits_x^{\infty}\frac{\sin(2\omega t+\delta)p_-^2(t,\lambda)e^{-2ik(\mu)\frac ta}dt}{(t+1)^{\gamma}}.
\end{multline*}
Denote
\begin{multline*}
    Q_{21}^{I}(x,\lambda,\mu)
    :=\frac{ce^{2ik(\lambda)\frac xa}}{W(\psi_+(\lambda),\psi_-(\lambda))}
    \\
    \times
    \int\limits_0^x\frac{\sin(2\omega t+\delta)p_-^2(t,\lambda)
    \l(e^{-2ik(\lambda)\frac ta}-e^{-2ik(\mu)\frac ta}\r)dt}{(t+1)^{\gamma}}
\end{multline*}
and
\begin{equation*}
    Q_{21}^{II}(x,\lambda,\mu)
    :=-\frac{ce^{2ik(\lambda)\frac xa}}{W(\psi_+(\lambda),\psi_-(\lambda))}
    \int\limits_x^{\infty}\frac{\sin(2\omega t+\delta)p_-^2(t,\lambda)e^{-2ik(\mu)\frac ta}dt}{(t+1)^{\gamma}},
\end{equation*}
so that
\begin{equation*}
    Q_{21}(x,\lambda,\mu)=Q_{21}^{I}(x,\lambda,\mu)+Q_{21}^{II}(x,\lambda,\mu).
\end{equation*}

The second term can be estimated in the same manner as
$Q_{12}(x,\lambda)$ using Lemma \ref{lem estimate of the tail}.
Denote by
\begin{equation*}
    \hat b_n(\lambda):=\frac1a\int_0^ap_-^2(x,\lambda)e^{-2\pi in\frac xa}dx
\end{equation*}
the Fourier coefficients of $p_-^2(\cdot,\lambda)$. Then
\begin{multline}\label{eq estimate for Q-21-II}
    |Q_{21}^{II}(x,\lambda,\mu)|
    \le
    \frac{|c|}{|W(\psi_+(\lambda),\psi_-(\lambda))|}\frac1{(x+1)^{\gamma}}
    \\
    \times
    \sum_{n=-\infty}^{+\infty}|\hat b_n(\lambda)|
    \l(\frac1{\l|\frac{2k(\lambda)}a-2\omega-\frac{2\pi n}a\r|}+
    \frac1{\l|\frac{2k(\lambda)}a+2\omega-\frac{2\pi n}a\r|}\r)
    \\
    \le
    \frac{2|c|}{\varepsilon(\mu)|W(\psi_+(\lambda),\psi_-(\lambda))|}
    \l(\sum_{n=-\infty}^{+\infty}|\hat b_n(\lambda)|\r)
    \frac1{(x+1)^{\gamma}}
\end{multline}
(using that $k(\mu)\in\mathbb R$ and
$k(\lambda)\in\overline{\mathbb C_+}$).

To estimate $Q_{21}^I$ we shall need the following lemma:

    \begin{lem}\label{lem estimate of the beginning}
    Let $\varepsilon,\beta>0$, then there exists $c_2(\varepsilon,\beta,\gamma)$ such that for
    every $\xi_1$ and $\xi_2$ such that
    \begin{equation*}
        \begin{array}{c}
        0\le\Im \xi_1\le1,\ |\xi_1|\ge\varepsilon,
        \\
        \xi_2\in\mathbb R,\ |\xi_2|\ge\varepsilon,
        \\
        |\Re  \xi_1-\xi_2|\le\beta\Im  \xi_1
        \end{array}
    \end{equation*}
    and for every $x\ge0$ holds:
    \begin{equation*}
        \l|e^{i\xi_1 x}\int_0^x\frac{\l(e^{-i\xi_1t}-e^{-i\xi_2t}\r)dt}{(t+1)^{\gamma}}\r|
        <\frac{c_2(\varepsilon,\beta,\gamma)}{(x+1)^{\gamma}}.
    \end{equation*}
    \end{lem}

\begin{proof}
Integrating by parts we get
\begin{multline*}
    e^{i\xi_1 x}\int_0^x\frac{\l(e^{-i\xi_1t}-e^{-i\xi_2t}\r)dt}{(t+1)^{\gamma}}
    =\frac{ie^{i\xi_1x}(\xi_1-\xi_2)}{i\xi_1\xi_2}
    +\frac{e^{i(\xi_1-\xi_2)x}}{i\xi_2(x+1)^{\gamma}}
    \\
    +\frac i{\xi_1(x+1)^{\gamma}}
    +\frac{\gamma e^{i\xi_1x}(\xi_1-\xi_2)}{i\xi_1\xi_2}
    \int_0^x\frac{e^{-i\xi_2t}dt}{(t+1)^{\gamma+1}}
    \\
    -\frac{\gamma e^{i\xi_1x}}{i\xi_1}
    \int_0^x\frac{\l(e^{-i\xi_1t}-e^{-i\xi_2t}\r)dt}{(t+1)^{\gamma+1}}.
\end{multline*}
Consider the new constant
\begin{equation*}
    c_3(\gamma):=\max_{x\ge0}x^{\gamma}e^{-x}.
\end{equation*}
For every $x\ge0$ and $\xi_1$ considered,
\begin{equation*}
    (\Im \xi_1)^{\gamma}e^{-\Im \xi_1x}\le\frac{c_3(\gamma)e^{\Im \xi_1}}{(x+1)^{\gamma}}
    \le\frac{ec_3(\gamma)}{(x+1)^{\gamma}}.
\end{equation*}
Using that
\begin{equation*}
    |\Re \xi_1-\xi_2|\le\beta\Im \xi_1,
\end{equation*}
which is equivalent to
\begin{equation*}
    |\xi_1-\xi_2|\le\sqrt{\beta^2+1} \Im \xi_1,
\end{equation*}
and the integral can be estimated as
\begin{multline*}
    \l|e^{i\xi_1 x}\int_0^x\frac{\l(e^{-i\xi_1t}-e^{-i\xi_2t}\r)dt}{(t+1)^{\gamma}}\r|
    \le\frac{2ec_3(\gamma)\sqrt{\beta^2+1}}{\varepsilon^2(x+1)^{\gamma}}+\frac2{\varepsilon(x+1)^{\gamma}}
    \\
    +\frac{\gamma}{\varepsilon}
    \l|e^{i\xi_1 x}\int_0^x\frac{\l(e^{-i\xi_1t}-e^{-i\xi_2t}\r)dt}{(t+1)^{\gamma+1}}\r|.
\end{multline*}
The last term can be split into three parts as follows:
\begin{multline*}
    \l|e^{i\xi_1 x}\int_0^x\frac{\l(e^{-i\xi_1t}-e^{-i\xi_2t}\r)dt}{(t+1)^{\gamma+1}}\r|
    \\
    \le e^{-\Im \xi_1x}
    \l[\int_0^{\frac1{\Im \xi_1}}+\int_{\frac1{\Im \xi_1}}^{\frac x2}+\int_{\frac x2}^x\r]
    \frac{\l|e^{-i\xi_1t}-e^{-i\xi_2t}\r|dt}{(t+1)^{\gamma+1}}.
\end{multline*}
Let us estimate these three integrals separately.
\begin{enumerate}
\item
\begin{equation*}
    e^{-\Im \xi_1x}\int_0^{\frac1{\Im \xi_1}}\frac{\l|e^{-i\xi_1t}-e^{-i\xi_2t}\r|dt}{(t+1)^{\gamma+1}}
    =
    e^{-\Im \xi_1x}\int_0^{\frac1{\Im \xi_1}}\frac{\l|e^{-i(\xi_1-\xi_2)t}-1\r|dt}{(t+1)^{\gamma+1}}.
\end{equation*}
Introduce the constant
\begin{equation*}
    c_4(\beta):=\max_{|x|\le\sqrt{\beta^2+1}}\frac{|e^x-1|}{|x|}.
\end{equation*}
Since for the first interval
\begin{equation*}
    |-i(\xi_1-\xi_2)t|\le\frac{|\xi_1-\xi_2|}{\Im \xi_1}\le\sqrt{\beta^2+1},
\end{equation*}
we have:
\begin{multline*}
    e^{-\Im \xi_1x}\int_0^{\frac1{\Im \xi_1}}\frac{\l|e^{-i\xi_1t}-e^{-i\xi_2t}\r|dt}{(t+1)^{\gamma+1}}
    \le
    e^{-\Im \xi_1x}\int_0^{\frac1{\Im \xi_1}}\frac{c_4(\beta)\sqrt{\beta^2+1}\Im \xi_1tdt}{(t+1)^{\gamma+1}}
    \\
    \le\frac{ec_3(\gamma)c_4(\beta)\sqrt{\beta^2+1}(\Im \xi_1)^{1-\gamma}}{(x+1)^{\gamma}}
    \int_0^{\frac1{\Im \xi_1}}\frac{dt}{(t+1)^{\gamma}}
    \\
    =\frac{ec_3(\gamma)c_4(\beta)\sqrt{\beta^2+1}((1+\Im \xi_1)^{1-\gamma}-(\Im \xi_1)^{1-\gamma})}
    {(x+1)^{\gamma}(1-\gamma)}
    \\
    \le
    \frac{2^{1-\gamma}ec_3(\gamma)c_4(\beta)\sqrt{\beta^2+1}}{(1-\gamma)(x+1)^{\gamma}}.
\end{multline*}
\item For $x\ge\frac2{\Im \xi_1}$ we have
\begin{multline*}
    e^{-\Im \xi_1x}\int_{\frac1{\Im \xi_1}}^{\frac x2}\frac{\l|e^{-i\xi_1t}-e^{-i\xi_2t}\r|dt}{(t+1)^{\gamma+1}}
    \\
    \le
    e^{-\Im \xi_1\frac x2}\int_{\frac1{\Im \xi_1}}^{\frac x2}
    \frac{\l(e^{\Im \xi_1\l(t-\frac x2\r)}+e^{-\Im \xi_1\frac x2}\r)dt}{(t+1)^{\gamma+1}}
    \\
    \le
    2e^{-\Im \xi_1\frac x2}\int_{\frac1{\Im \xi_1}}^{\infty}\frac{dt}{t^{\gamma+1}}
    \le
    \frac{2^{\gamma+1}ec_3(\gamma)}{\gamma(x+2)^{\gamma}}.
\end{multline*}
For $x<\frac2{\Im \xi_1}$ the integral is negative.
\item
\begin{equation*}
    e^{-\Im \xi_1x}\int_{\frac x2}^x\frac{\l|e^{-i\xi_1t}-e^{-i\xi_2t}\r|dt}{(t+1)^{\gamma+1}}
    \le
    \int_{\frac x2}^x\frac{2dt}{(t+1)^{\gamma+1}}<\frac{2^{\gamma+1}}{\gamma(x+2)^{\gamma}}.
\end{equation*}
\end{enumerate}

Combining these estimates we get
\begin{equation*}
    \l|e^{i\xi_1 x}\int_0^x\frac{\l(e^{-i\xi_1t}-e^{-i\xi_2t}\r)dt}{(t+1)^{\gamma+1}}\r|
    <\frac{c_2(\varepsilon,\beta,\gamma)}{(x+1)^{\gamma}}
\end{equation*}
with
\begin{multline*}
    c_2(\varepsilon,\beta,\gamma):=
    \frac{2ec_3(\gamma)\sqrt{\beta^2+1}}{\varepsilon^2}+\frac2{\varepsilon}
    \\
    +\frac{\gamma}{\varepsilon}
    \l(
    \frac{2^{1-\gamma}ec_3(\gamma)c_4(\beta)\sqrt{\beta^2+1}}{1-\gamma}
    +
    \frac{2^{\gamma+1}ec_3(\gamma)}{\gamma}
    +
    \frac{2^{\gamma+1}}{\gamma}
    \r).
\end{multline*}
This completes the proof of the lemma.
\end{proof}

Let us continue to estimate $Q_{21}^I$
\begin{multline*}
    Q_{21}^I(x,\lambda,\mu)
    =\sum_{n=-\infty}^{+\infty}
    \frac{c\hat b_n(\lambda)e^{i\delta+2i\omega+2\pi in\frac xa}}{2iW(\psi_+(\lambda),\psi_-(\lambda))}
    e^{i\l(\frac{2k(\lambda)}a-2\omega-\frac{2\pi n}a\r)x}
    \\
    \times
    \int_0^x\frac{
    \l(
    e^{-i\l(\frac{2k(\lambda)}a-2\omega-\frac{2\pi n}a\r)t}-
    e^{-i\l(\frac{2k(\mu)}a-2\omega-\frac{2\pi n}a\r)t}
    \r)
    dt}{(t+1)^{\gamma}}
    \\
    -\sum_{n=-\infty}^{+\infty}
    \frac{c\hat b_n(\lambda)e^{-i\delta-2i\omega+2\pi in\frac xa}}{2iW(\psi_+(\lambda),\psi_-(\lambda))}
    e^{i\l(\frac{2k(\lambda)}a+2\omega-\frac{2\pi n}a\r)x}
    \\
    \times
    \int_0^x\frac{
    \l(
    e^{-i\l(\frac{2k(\lambda)}a+2\omega-\frac{2\pi n}a\r)t}-
    e^{-i\l(\frac{2k(\mu)}a+2\omega-\frac{2\pi n}a\r)t}
    \r)
    dt}{(t+1)^{\gamma}}.
\end{multline*}
Applying Lemma \ref{lem estimate of the beginning} we get
\begin{equation}\label{eq estimate for Q-21-I}
    |Q_{21}^I(x,\lambda,\mu)|
    \le
    \frac{|c|c_2(\varepsilon(\mu),\beta,\gamma)}{|W(\psi_+(\lambda),\psi_-(\lambda))|}
    \l(\sum_{n=-\infty}^{+\infty}|\hat b_n(\lambda)|\r)
    \frac1{(x+1)^{\gamma}}.
\end{equation}
Therefore combining the estimates \eqref{eq estimate for Q-12},
\eqref{eq estimate for Q-21-I} and \eqref{eq estimate for Q-21-II}
the matrix $ Q $ can be estimated as follows
\begin{multline*}
    \|Q(x,\lambda,\mu)\|\le\frac1{(x+1)^{\gamma}}
    \frac{|c|}{|W(\psi_+(\lambda),\psi_-(\lambda))|}
    \\
    \times
    \sqrt{\frac4{\varepsilon^2(\mu)}\l(\sum_{n=-\infty}^{+\infty}|b_n(\lambda)|\r)^2
    +
    \l(\frac2{\varepsilon(\mu)}+c_2(\varepsilon(\mu),\beta,\gamma)\r)^2
    \l(\sum_{n=-\infty}^{+\infty}|\hat b_n(\lambda)|\r)^2}.
\end{multline*}
Let us estimate now the Fourier coefficients. For $n\neq0$ we
have:
\begin{equation*}
    b_n(\lambda)=\frac1a\int_0^ap_+^2(x,\lambda)e^{-2\pi in\frac xa}dx
    =-\frac a{4\pi^2n^2}\int_0^a(p_+^2(x,\lambda))''e^{-2\pi in\frac xa}dx.
\end{equation*}
Thus
\begin{equation}\label{eq estimate of b-n}
    |b_n(\lambda)|\le\frac a{4\pi^2n^2}\int_0^a|(p_+^2(x,\lambda))''|dx.
\end{equation}
In terms of the corresponding Bloch solution the second derivative of $p_+^2$ is
\begin{multline*}
    (p_+^2(x,\lambda))''=2e^{-\frac{2ik(\lambda)x}a}
    \l(\psi_+''(x,\lambda)\psi_+(x,\lambda)-\frac{2k^2(\lambda)}{a^2}\psi_+^2(x,\lambda)
    \r.
    \\
    \l.
    +(\psi_+'(x,\lambda))^2
    -\frac{4ik(\lambda)}a\psi_+'(x,\lambda)\psi_+(x,\lambda)
    \r).
\end{multline*}
Let us estimate the norm in $L_1(0;a)$ of the function $\psi_+''(\cdot,\lambda)$.
From the equation
\begin{equation*}
    \psi_+''(x,\lambda)=(q(x)-\lambda)\psi_+(x,\lambda),
\end{equation*}
we see that
\begin{equation*}
    \|\psi_+''(\cdot,\lambda)\|_{L_1(0;a)}\le(\|q\|_{L_1(0;a)}+|\lambda|a)
    \max_{x\in[0;a]}|\psi_+(x,\lambda)|.
\end{equation*}
Since the functions
\begin{equation*}
    e^{-\frac{ik(\lambda)x}a},\psi_+(x,\lambda),\psi_+'(x,\lambda)
\end{equation*}
are continuous in both variables on the set
\begin{equation*}
    [0;a]\times U(\beta,\mu)
\end{equation*}
and hence attain their maximums, we have: the integral
\begin{equation*}
    \int_0^a|(p_+^2(x,\lambda))''|dx
\end{equation*}
is bounded on $U(\beta,\mu)$. For n=0,
\begin{equation*}
    b_0(\lambda)=\frac1a\int_0^ap_+^2(x,\lambda)dx
    =\frac1a\int_0^ae^{-2ik(\lambda)\frac xa}\psi_+^2(x,\lambda)dx
\end{equation*}
and is also bounded. The same argument is valid for $\hat b_n$.
Finally we see that there exists $c_5(\beta,\mu)$ such that for
every $\lambda\in U(\beta,\mu)$
\begin{equation*}
    |b_n(\lambda)|,|\hat
    b_n(\lambda)|\le\frac{c_5(\beta,\mu)}{n^2+1}.
\end{equation*}

The Wronskian $W(\psi_+(\lambda),\psi_-(\lambda))$ does not have
zeros in $U(\beta,\mu)$. Also in the formula for the derivative of
$Q$,
\begin{multline*}
    Q'(x,\lambda,\mu)=
    \frac{c\sin(2\omega x+\delta)}{(x+1)^{\gamma}W(\psi_+(\lambda),\psi_-(\lambda))}
    \left(%
    \begin{array}{cc}
    0 & -p_+^2(x,\lambda) \\
    p_-^2(x,\lambda) & 0 \\
    \end{array}%
    \right)
    \\
    -
    \l[
    Q(x,\lambda,\mu),
    \left(%
    \begin{array}{cc}
    -\frac{ik(\lambda)}a & 0 \\
    0 & \frac{ik(\lambda)}a \\
    \end{array}%
    \right)
    \r],
\end{multline*}
the functions $\pm\frac{k(\lambda)}a,\pm c\sin(2\omega
x+\delta)p_{\pm}^2(x,\lambda)$ are bounded for
$(x;\lambda)\in[0;+\infty)\times U(\beta,\mu)$. Hence there exists
$c_1(\beta,\mu,\gamma)$ such that for every $\lambda\in
U(\beta,\mu)$ and $x\ge0$ the following estimates hold
\begin{equation*}
    \|Q(x,\lambda,\mu)\|,\|Q'(x,\lambda,\mu)\|\le\frac{c_1(\beta,\mu,\gamma)}{(x+1)^{\gamma}}.
\end{equation*}
This completes the proof of the theorem.
\end{proof}

Let us study the properties of the remainder
\begin{multline}\label{eq R-(2)}
    R^{(2)}(x,\lambda,\mu):=e^{-Q(x,\lambda,\mu)}
    \l[
    \left(%
    \begin{array}{cc}
    -\frac{ik(\lambda)}a & 0 \\
    0 & \frac{ik(\lambda)}a \\
    \end{array}%
    \right)
    \r.
    \\
    +\frac{c\sin(2\omega x+\delta)}{(x+1)^{\gamma}W(\psi_+(\lambda),\psi_-(\lambda))}
    \left(%
    \begin{array}{cc}
    -p_+(x,\lambda)p_-(x,\lambda) & -p_+^2(x,\lambda) \\
    p_-^2(x,\lambda) & p_+(x,\lambda)p_-(x,\lambda) \\
    \end{array}%
    \right)
    \\
    \l.
    +R^{(1)}(x,\lambda)\r]e^{Q(x,\lambda,\mu)}
    -
    \left(%
    \begin{array}{cc}
    -\frac{ik(\lambda)}a & 0 \\
    0 & \frac{ik(\lambda)}a \\
    \end{array}%
    \right)
    \\
    -\frac{c\sin(2\omega x+\delta)}{(x+1)^{\gamma}W(\psi_+(\lambda),\psi_-(\lambda))}
    \left(%
    \begin{array}{cc}
    -p_+(x,\lambda)p_-(x,\lambda) & 0 \\
    0 & p_+(x,\lambda)p_-(x,\lambda) \\
    \end{array}%
    \right)
    \\
    -e^{-Q(x,\lambda,\mu)}\l(e^{Q(x,\lambda,\mu)}\r)'.
\end{multline}

    \begin{lem}\label{lem remainder}
    The remainder $R^{(2)}$ given by \eqref{eq R-(2)} possesses
    the following properties:
    \begin{enumerate}
    \item $R^{(2)}\in L_1(0;\infty)$ and the integral
    \begin{equation*}
        \int_0^{\infty}\|R^{(2)}(x,\lambda,\mu)\|dx
    \end{equation*}
    converges uniformly with respect to $\lambda\in U(\beta,\mu)$.
    \item
    \begin{equation}\label{eq conjugation property of R-(2)}
        \begin{array}{c}
        (R^{(2)}(x,\mu,\mu))_{21}=\overline{(R^{(2)}(x,\mu,\mu))_{12}},
        \\
        (R^{(2)}(x,\mu,\mu))_{22}=\overline{(R^{(2)}(x,\mu,\mu))_{11}}.
        \end{array}
    \end{equation}
    \end{enumerate}
    \end{lem}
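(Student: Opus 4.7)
The plan is to combine the Taylor expansions $e^{\pm Q}=I\pm Q+O(1/x^{2\gamma})$ and $(e^{\pm Q})'=\pm Q'+O(1/x^{2\gamma})$ (which hold uniformly on $U(\beta,\mu)$ by Theorem \ref{thm about Q}) with the defining equation \eqref{eq equation on Q} for $Q$, and then check that the surviving terms are integrable. Writing $D$ for $\mathrm{diag}(-ik/a,ik/a)$ and $V$ for the full matrix coefficient multiplying $\frac{c\sin(2\omega x+\delta)}{(x+1)^{\gamma}W(\psi_+,\psi_-)}$, the conjugation in \eqref{eq R-(2)} becomes
\[
e^{-Q}(D+V+R^{(1)})e^{Q}=D+V+R^{(1)}+[D,Q]+[V,Q]+[R^{(1)},Q]+O(1/x^{2\gamma}),
\]
while $e^{-Q}(e^{Q})'=Q'+O(1/x^{2\gamma})$ since $QQ'=O(1/x^{2\gamma})$.

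Substituting into \eqref{eq R-(2)} the diagonal piece of $V$ cancels with the explicitly subtracted diagonal term, and there remain
\[
R^{(2)}=\bigl(V_{\text{off-diag}}-Q'+[D,Q]\bigr)+R^{(1)}+[V,Q]+[R^{(1)},Q]+O(1/x^{2\gamma}).
\]
By \eqref{eq equation on Q} one has $Q'+[Q,D]=V_{\text{off-diag}}$, i.e.\ $V_{\text{off-diag}}-Q'=-[D,Q]$, so the parenthesized expression vanishes and
\[
R^{(2)}(x,\lambda,\mu)=R^{(1)}(x,\lambda)+[V(x,\lambda),Q(x,\lambda,\mu)]+[R^{(1)}(x,\lambda),Q(x,\lambda,\mu)]+O(1/x^{2\gamma}).
\]
Now $R^{(1)}\in L_1$ is immediate from $q_1\in L_1(\mathbb R_+)$ together with the uniform boundedness of $p_{\pm}$ and $1/W$ on $U(\beta,\mu)$ (by continuity on the compact set, as in the proof of Theorem \ref{thm about Q}); the commutator $[V,Q]$ is $O(1/x^{2\gamma})$ because $V=O(1/x^{\gamma})$ and $\|Q\|\le c_1/(x+1)^{\gamma}$, and $2\gamma>1$ gives integrability; $[R^{(1)},Q]$ is a product of an $L_1$ function by a bounded one; and the $O(1/x^{2\gamma})$ remainder is integrable for the same reason. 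All constants depend only on $\beta,\mu,\gamma$, which gives uniform convergence on $U(\beta,\mu)$ and proves item (1).

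For item (2), I would exploit that at the real point $\lambda=\mu$ we have $p_+(\cdot,\mu)=\overline{p_-(\cdot,\mu)}$, $k(\mu)\in\mathbb R$, and $W(\psi_+(\mu),\psi_-(\mu))\in i\mathbb R_+$, hence $\overline{c/W}=-c/W$. A direct inspection of \eqref{eq formula for Q on the real line}, \eqref{eq R-(1)} and the definition of $V$ then shows that each of the matrices $D$, $V$, $R^{(1)}$, and $Q(x,\mu,\mu)$ has the \emph{swap--conjugate} structure $M_{22}=\overline{M_{11}}$, $M_{21}=\overline{M_{12}}$; for $Q(x,\mu,\mu)$ this is seen by writing $Q_{21}$ from \eqref{eq formula for Q} with $\lambda=\mu$, combining the two integrals into $-\int_x^{\infty}$, and comparing with $\overline{Q_{12}}$. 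A short algebraic check shows that this class of matrices is closed under addition and multiplication, so it is preserved by the series $e^{\pm Q}$, by $(e^{Q})'$, and hence by the entire expression on the right-hand side of \eqref{eq R-(2)}, which yields \eqref{eq conjugation property of R-(2)}.

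The main obstacle is the careful bookkeeping of the quadratic-in-$Q$ terms arising from the conjugation and from $(e^{\pm Q})'$, especially since $Q$ and $Q'$ do not commute in general; the key point that makes everything fit is that by Theorem \ref{thm about Q} both $\|Q\|$ and $\|Q'\|$ are bounded by $c_1/(x+1)^{\gamma}$, so all second-order cross terms are automatically $O(1/x^{2\gamma})$, and the assumption $\gamma>1/2$ is exactly what is needed to push them into $L_1(0,\infty)$ uniformly on $U(\beta,\mu)$.
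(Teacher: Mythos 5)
Your proposal is correct and follows essentially the same route as the paper: part (1) is exactly what the paper means by ``follows directly from Theorem \ref{thm about Q}'' (you merely make explicit the cancellation via \eqref{eq equation on Q} and the $O(1/x^{2\gamma})$ bookkeeping with $2\gamma>1$), and part (2) uses the same observation that the swap--conjugate class is closed under sums and products and that $D$, $V$, $R^{(1)}$ and $Q(x,\mu,\mu)$ belong to it because $p_+=\overline{p_-}$ and $W$ is purely imaginary on the spectrum. No gaps.
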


\begin{proof}
The first assertion follows directly from Theorem \ref{thm about
Q}.

The property of matrices from the second assertion is preserved under summation and
multiplication of such matrices as well as taking the inverse. We can see from
\eqref{eq R-(1)} and \eqref{eq formula for Q on the real line} that $R^{(1)}(x,\mu)$ and
$Q(x,\mu,\mu)$ possess this conjugation property. Therefore
\begin{equation*}
    Q'(x,\mu,\mu),e^{Q(x,\mu,\mu)},\l(e^{Q(x,\mu,\mu)}\r)'
\end{equation*}
and finally $R^{(2)}(x,\mu,\mu)$ (from \eqref{eq R-(2)}) also
possess this property. It should be taken into account that
$W(\psi_+(\mu),\psi_-(\mu))$ is pure imaginary.
\end{proof}

Let us denote
\begin{equation*}
    \nu(x,\lambda):=-\frac{ik(\lambda)}a-\frac{c\sin(2\omega x+\delta)p_+(x,\lambda)p_-(x,\lambda)}
    {(x+1)^{\gamma}W(\psi_+(\lambda),\psi_-(\lambda))}.
\end{equation*}
Finally we obtain a system of the Levinson form:
\begin{equation}\label{eq system for widetilde-v}
    \widetilde v'(x)=
    \l[
    \left(%
    \begin{array}{cc}
    \nu(x,\lambda) & 0 \\
    0 & -\nu(x,\lambda) \\
    \end{array}%
    \right)
    +R^{(2)}(x,\lambda,\mu)
    \r]
    \widetilde v(x).
\end{equation}

\section{A Levinson-type theorem for $2\times2$ systems}
In this section, we prove two statements that give a uniform
estimate and asymptotics of solutions to certain $ 2 \times 2 $
differential systems. The approach is the same as for the Levinson
theorem \cite{Coddington-Levinson-55}, but the difference is that
we are interested in properties of solution with a given initial
condition.

Consider the system
\begin{equation}\label{eq Levinson system}
    u_1'(x)=
    \l[
    \left(%
    \begin{array}{cc}
    \lambda(x) & 0 \\
    0 & -\lambda(x) \\
    \end{array}%
    \right)
    +R(x)
    \r]
    u_1(x)
\end{equation}
for $x\ge0$, where $u_1(x) $ is a two-dimensional vector function
and $ R(x) $ is a $2\times2$ matrix with complex entries.

    \begin{lem}\label{lem Levinson estimate}
    Assume that
    \begin{equation}\label{eq condition of summability}
        \int_0^{\infty}\|R(t)\|dt<\infty.
    \end{equation}
    and that there exists a constant $M$ such that for every $x\le
    y$ it holds
    \begin{equation*}
        \int_x^y \Re  \lambda(t)dt\ge-M.
    \end{equation*}
    Then every solution $u_1$ to \eqref{eq Levinson system} satisfies
    the estimate
    \begin{equation}\label{eq estimate on growth of solution}
        \|u_1(x)\|\le\|u_1(0)\|e^{\int_0^x\Re \lambda(t)dt}\sqrt{1+e^{4M}}
        e^{\sqrt{1+e^{4M}}\int_0^{\infty}\|R(t)\|dt}.
    \end{equation}
    \end{lem}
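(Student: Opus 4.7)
My plan is to use the standard variation-of-parameters trick: separate off the leading diagonal part as the ``free'' evolution and treat the remainder $R$ as a perturbation, then apply Gronwall.

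\medskip
\noindent\textbf{Step 1 (variation of parameters).} I would set $\Lambda(x):=\int_0^x\lambda(t)\,dt$ and $\Phi(x):=\mathrm{diag}\bigl(e^{\Lambda(x)},e^{-\Lambda(x)}\bigr)$, the fundamental matrix of the unperturbed diagonal system. Writing \eqref{eq Levinson system} in the integral form
\begin{equation*}
    u_1(x)=\Phi(x)u_1(0)+\int_0^x \Phi(x)\Phi^{-1}(t)R(t)u_1(t)\,dt,
\end{equation*}
reduces everything to estimating the norms of $\Phi(x)$ and $\Phi(x)\Phi^{-1}(t)$ and then applying Gronwall's inequality.

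\medskip
\noindent\textbf{Step 2 (the key bound on $\|\Phi(x)\Phi^{-1}(t)\|$).} Since $\Phi(x)\Phi^{-1}(t)=\mathrm{diag}\bigl(e^{\int_t^x\lambda},e^{-\int_t^x\lambda}\bigr)$, the operator norm equals $\max\bigl(e^{a},e^{-a}\bigr)$ with $a:=\Re\int_t^x\lambda$. Using the elementary inequality $\max(e^a,e^{-a})\le\sqrt{e^{2a}+e^{-2a}}$ together with the hypothesis $a\ge-M$ for $t\le x$, I factor out $e^{2a}$ to obtain
\begin{equation*}
    \|\Phi(x)\Phi^{-1}(t)\|\le \sqrt{1+e^{4M}}\;e^{\int_t^x\Re\lambda(s)\,ds},
\end{equation*}
and in particular $\|\Phi(x)\|\le\sqrt{1+e^{4M}}\,e^{\int_0^x\Re\lambda}$. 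This is precisely where the factor $\sqrt{1+e^{4M}}$ in the statement originates; recognising that this explicit form (rather than the crude $e^M$) must be retained is the only delicate point in the argument.

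\medskip
\noindent\textbf{Step 3 (Gronwall).} I would then introduce $\phi(x):=e^{-\int_0^x\Re\lambda(t)\,dt}\|u_1(x)\|$ and multiply the integral inequality for $\|u_1(x)\|$ by $e^{-\int_0^x\Re\lambda}$. The exponential factors telescope, and I arrive at
\begin{equation*}
    \phi(x)\le \sqrt{1+e^{4M}}\,\|u_1(0)\|+\sqrt{1+e^{4M}}\int_0^x\|R(t)\|\phi(t)\,dt.
\end{equation*}
Gronwall's lemma, combined with the summability assumption \eqref{eq condition of summability}, yields
\begin{equation*}
    \phi(x)\le \sqrt{1+e^{4M}}\,\|u_1(0)\|\exp\!\Bigl(\sqrt{1+e^{4M}}\int_0^\infty\|R(t)\|\,dt\Bigr),
\end{equation*}
and multiplying back by $e^{\int_0^x\Re\lambda}$ gives exactly \eqref{eq estimate on growth of solution}. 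No step is genuinely hard; the only thing to get right is the sharp estimate of the transition matrix in Step 2, which drives the precise constant appearing in the lemma.
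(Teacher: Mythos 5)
Your proposal is correct and follows essentially the same route as the paper: variation of parameters around the diagonal part, normalization by the scalar factor $e^{\int_0^x\lambda}$ (your $\phi$ is exactly the norm of the paper's $u_3$), and the bound $\max(1,e^{2M})\le\sqrt{1+e^{4M}}$ on the transition factor. The only cosmetic difference is that you close the argument with Gronwall's inequality, whereas the paper bounds the iterates of the Volterra operator by $(\;\cdot\;)^j/j!$ and sums the Neumann series — two interchangeable ways of reaching the same exponential constant.
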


\begin{proof}
First transform the system \eqref{eq Levinson system} by variation
of parameters. Denote
\begin{equation*}
    \Lambda(x):=
    \left(%
    \begin{array}{cc}
    \lambda(x) & 0 \\
    0 & -\lambda(x) \\
    \end{array}%
    \right)
\end{equation*}
and take
\begin{equation}\label{eq u-2}
    u_1(x)=e^{\int_0^x\Lambda(t)dt}u_2(x),\text{ or }
    u_2(x):=e^{-\int_0^x\Lambda(t)dt}u_1(x).
\end{equation}
After substitution \eqref{eq Levinson system} becomes
\begin{equation}\label{eq u-2'}
    u_2'(x)=e^{-\int_0^x\Lambda(t)dt}R(x)u_1(x).
\end{equation}
Integrating this from $0$ to $x$ and returning back to the
function $u_1$ on the left-hand side we get
\begin{equation}\label{eq integral equation for u-1}
    u_1(x)=e^{\int_0^x\Lambda(t)dt}u_1(0)+\int_0^xe^{\int_t^x\Lambda(s)ds}R(t)u_1(t)dt.
\end{equation}
Now multiply this expression by $e^{-\int_0^x\Lambda(s)ds}$ and
denote
\begin{equation}\label{eq u-3}
    u_3(x):=e^{-\int_0^x\Lambda(t)dt}u_1(x).
\end{equation}
We get the following equation for $ u_3 $ considered in $ L_\infty
(0,\infty); \mathbb C^2)$
\begin{equation}\label{eq integral equation for u-3}
    u_3(x)=
    \left(%
    \begin{array}{cc}
    1 & 0 \\
    0 & e^{-2\int_0^x\lambda(s)ds} \\
    \end{array}%
    \right) u_1(0)+\int_0^x
    \left(%
    \begin{array}{cc}
    1 & 0 \\
    0 & e^{-2\int_t^x\lambda(s)ds} \\
    \end{array}%
    \right) R(t)u_3(t)dt.
\end{equation}
 The norm of the operator $V$,
\begin{equation*}
    V:u\mapsto\int_0^x
    \left(%
    \begin{array}{cc}
    1 & 0 \\
    0 & e^{-2\int_t^x\lambda(s)ds} \\
    \end{array}%
    \right) R(t)u(t)dt
\end{equation*}
is bounded by
\begin{equation*}
    \|V\|\le\sqrt{1+e^{4M}}\int_0^{\infty}\|R(t)\|dt
\end{equation*}
and the norm of the $j$-th power is bounded by
\begin{equation*}
    \|V^j\|\le\frac{(\sqrt{1+e^{4M}}\int_0^{\infty}\|R(t)\|dt)^j}{j!}.
\end{equation*}
Hence
\begin{equation*}
    u_3(x)=(I-V)^{-1}
       \left(%
    \begin{array}{cc}
    1 & 0 \\
    0 & e^{-2\int_0^x\lambda(s)ds} \\
    \end{array}%
    \right)
    u_1(0)
\end{equation*}
and
\begin{equation*}
    \|u_3\|_{L_{\infty}((0;\infty),\mathbb C^2)}
    \le\exp\l(\sqrt{1+e^{4M}}\int_0^{\infty}\|R(t)\|dt\r)\sqrt{1+e^{4M}}
    \|u_1(0)\|.
\end{equation*}
Returning to $u_1$, we arrive at the estimate \eqref{eq estimate on growth of solution}.
\end{proof}

The second lemma states the asymptotics of the solution.

    \begin{lem}\label{lem Levinson asymptotics}
    Let that all conditions of Lemma \ref{lem Levinson    estimate}
    be satisfied, then the following asymptotics hold:
  \begin{enumerate}
    \item If
    \begin{equation}\label{eq condition Levinson elliptic}
    \int_0^{\infty}\Re \lambda(t)dt<+\infty,
    \end{equation}
    then every solution $u_1$ of \eqref{eq Levinson system} has the following asymptotics:
    \begin{multline*}
    u_1(x)=
    \left(%
    \begin{array}{cc}
    e^{\int_0^x\lambda(s)ds} & 0 \\
    0 & e^{-\int_0^x\lambda(s)ds} \\
    \end{array}%
    \right)
    \l[
    u_1(0)
    \r.
    \\
    \l.
    +\int_0^{\infty}
    \left(%
    \begin{array}{cc}
    e^{-\int_0^t\lambda(s)ds} & 0\\
    0 & e^{\int_0^t\lambda(s)ds} \\
    \end{array}%
    \right)
    R(t)u_1(t)dt+o(1)
    \r]\as x\rightarrow\infty.
    \end{multline*}
    \item If
    \begin{equation}\label{eq condition Levinson hyperbolic}
    \int_0^{\infty}\Re \lambda(t)dt=+\infty,
    \end{equation}
    then every solution $u_1$ of \eqref{eq Levinson system} has the following asymptotics:
    \begin{multline*}
    u_1(x)=e^{\int_0^x\lambda(s)ds}
    \l[
    \left(%
    \begin{array}{cc}
    1 & 0 \\
    0 & 0 \\
    \end{array}%
    \right)
    \l(
    u_1(0)
    \r.
    \r.
    \\
    \l.
    \l.
    +\int_0^{\infty}
    e^{-\int_0^t\lambda(s)ds}R(t)u_1(t)dt\r)+o(1)
    \r]\as x\rightarrow\infty.
    \end{multline*}
    \end{enumerate}
    \end{lem}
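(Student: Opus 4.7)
The plan is to manipulate the integral equation \eqref{eq integral equation for u-1} obtained in the proof of Lemma \ref{lem Levinson estimate}, namely
\begin{equation*}
u_1(x)=e^{\int_0^x\Lambda(t)dt}u_1(0)+\int_0^x e^{\int_t^x\Lambda(s)ds}R(t)u_1(t)dt.
\end{equation*}
Since $\Lambda$ is diagonal the matrix exponentials commute and this can be rewritten as
\begin{equation*}
e^{-\int_0^x\Lambda(t)dt}u_1(x)=u_1(0)+\int_0^x e^{-\int_0^t\Lambda(s)ds}R(t)u_1(t)dt.
\end{equation*}
The strategy in both cases is to exhibit the integrand as an element of $L_1(0,\infty;\mathbb C^2)$, so that the integral converges as $x\rightarrow\infty$; writing $\int_0^x=\int_0^{\infty}-\int_x^{\infty}$ then turns the tail into an $o(1)$, and multiplying back by $e^{\int_0^x\Lambda(s)ds}$ yields the stated formula.

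For case (i), the hypothesis \eqref{eq condition Levinson elliptic} together with the two-sided bound $\int_x^y\Re\lambda\ge-M$ forces $\int_0^x\Re\lambda(s)ds$ to stay in a bounded set, so $\|e^{-\int_0^t\Lambda(s)ds}\|$ is uniformly bounded; combined with the estimate on $\|u_1(t)\|$ provided by Lemma \ref{lem Levinson estimate} (which, in this case, is genuinely a uniform bound), the integrand is majorised by a constant multiple of $\|R(t)\|\in L_1$, and the argument closes directly.

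For case (ii) the lower-diagonal entry of $e^{-\int_0^t\Lambda}$ blows up, so the two components of $u_1$ must be treated separately. The crucial input is that the proof of Lemma \ref{lem Levinson estimate} actually furnishes a uniform bound on the auxiliary function $u_3:=e^{-\int_0^\cdot\Lambda(s)ds}u_1$ itself (the Volterra iteration there is performed in $L_\infty$). The first-component scalar equation
\begin{equation*}
e^{-\int_0^x\lambda(s)ds}u_{1,1}(x)=u_{1,1}(0)+\int_0^x e^{-\int_0^t\lambda(s)ds}\bigl(R(t)u_1(t)\bigr)_1dt
\end{equation*}
has integrand $R_{11}(t)u_{3,1}(t)+R_{12}(t)e^{-2\int_0^t\lambda(s)ds}u_{3,2}(t)$, dominated by $C\|R(t)\|\in L_1$ because $u_3$ is bounded and $|e^{-2\int_0^t\lambda}|\le e^{2M}$ (and in fact tends to $0$). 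The tail estimate then gives the claimed asymptotics for $u_{1,1}$. For the second component, $u_{1,2}(x)=e^{-\int_0^x\lambda(s)ds}u_{3,2}(x)$ combined with the uniform bound on $u_{3,2}$ gives $|u_{1,2}(x)|\le Ce^{-\int_0^x\Re\lambda(s)ds}$; dividing by $e^{\int_0^x\lambda(s)ds}$ produces $O(e^{-2\int_0^x\Re\lambda(s)ds})=o(1)$ by \eqref{eq condition Levinson hyperbolic}, which matches exactly the vanishing second component contributed by the projection $\left(\begin{array}{cc}1 & 0\\0 & 0\end{array}\right)$ in the statement.

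The main obstacle I anticipate is precisely the need to use the sharper bound on $u_3$ rather than merely the quoted bound $\|u_1(x)\|\le C\|u_1(0)\|e^{\int_0^x\Re\lambda(s)ds}$: with only the latter, neither the integrability of $R_{12}e^{-2\int_0^t\lambda}u_{3,2}$ nor the decay $|u_{1,2}|\le Ce^{-\int_0^x\Re\lambda}$ is visible in case (ii), and the argument does not close. Fortunately this sharper bound is implicit in the proof of Lemma \ref{lem Levinson estimate}, where the Neumann series for $(I-V)^{-1}$ is constructed directly in $L_\infty(0,\infty;\mathbb C^2)$ for the function $u_3$.
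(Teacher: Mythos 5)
Your case (i) is correct and coincides with the paper's own argument. In case (ii), however, there is a genuine gap: the ``sharper bound'' on which everything rests is false. The function $u_3:=e^{-\int_0^{\cdot}\Lambda(s)ds}u_1$ with the \emph{matrix} $\Lambda=\left(\begin{array}{cc}\lambda&0\\0&-\lambda\end{array}\right)$ is not, in general, uniformly bounded: its second component is $u_{3,2}(x)=e^{\int_0^x\lambda(s)ds}u_{1,2}(x)=u_{1,2}(0)+\int_0^xe^{\int_0^t\lambda(s)ds}\bigl(R(t)u_1(t)\bigr)_2dt$, and the integrand here is only $O\bigl(\|R(t)\|e^{2\int_0^t\Re\lambda(s)ds}\bigr)$, which need not be integrable under \eqref{eq condition Levinson hyperbolic}. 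Concretely, take $\lambda\equiv1$ and $R(t)=\left(\begin{array}{cc}0&0\\(1+t)^{-2}&0\end{array}\right)$: then $u_{1,1}(x)=e^xu_{1,1}(0)$ and $e^xu_{1,2}(x)=u_{1,2}(0)+u_{1,1}(0)\int_0^xe^{2t}(1+t)^{-2}dt\sim u_{1,1}(0)\,e^{2x}/(2x^2)\rightarrow\infty$. So your bound $|u_{1,2}(x)|\le Ce^{-\int_0^x\Re\lambda(s)ds}$ fails (in this example $u_{1,2}$ even grows), and with it your derivation of the vanishing second component. Your first-component domination happens to survive, but only because the product $e^{-2\int_0^t\lambda}u_{3,2}(t)=e^{-\int_0^t\lambda}u_{1,2}(t)$ is bounded directly by the quoted estimate \eqref{eq estimate on growth of solution} — not because the two factors are separately bounded.

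What the proof of Lemma \ref{lem Levinson estimate} actually bounds uniformly is the \emph{scalar} renormalization $e^{-\int_0^x\lambda(s)ds}u_1(x)$ (its Volterra kernel $\left(\begin{array}{cc}1&0\\0&e^{-2\int_t^x\lambda}\end{array}\right)R(t)$ is dominated by $\sqrt{1+e^{4M}}\,\|R(t)\|$, whereas the matrix renormalization would produce the unbounded entry $e^{2\int_0^t\lambda}R_{21}(t)$ and the iteration would not close); this is precisely equivalent to \eqref{eq estimate on growth of solution} and is not sharper than it. The paper's route in case (ii) is to write the integral equation for this scalar renormalization,
\begin{equation*}
e^{-\int_0^x\lambda(s)ds}u_1(x)=\left(\begin{array}{cc}1&0\\0&e^{-2\int_0^x\lambda(s)ds}\end{array}\right)u_1(0)+\int_0^x\left(\begin{array}{cc}1&0\\0&e^{-2\int_t^x\lambda(s)ds}\end{array}\right)e^{-\int_0^t\lambda(s)ds}R(t)u_1(t)\,dt,
\end{equation*}
and apply dominated convergence: the integrand is dominated by $C\|R(t)\|\in L_1$, while for each fixed $t$ the factor $e^{-2\int_t^x\lambda}$ tends to $0$ as $x\rightarrow\infty$ by \eqref{eq condition Levinson hyperbolic} and is bounded by $e^{2M}$. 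This yields both components at once; in particular the second component of $e^{-\int_0^x\lambda}u_1$ tends to $0$ without any pointwise decay of $u_{1,2}$ (in the example above $e^{-x}u_{1,2}(x)\sim u_{1,1}(0)/(2x^2)$, which is $o(1)$ but certainly not $O(e^{-2x})$). You need to replace your treatment of the second component by this, or an equivalent, dominated-convergence argument.
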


\begin{proof}
Asymptotics 1. Consider the function $u_2$ given by \eqref{eq u-2}
and integrate \eqref{eq u-2'}:
\begin{equation}\label{eq integral equation for u-2}
    u_2(x)=u_1(0)+\int_0^xe^{-\int_0^t\Lambda(s)ds}R(t)u_1(t)dt.
\end{equation}
Since for every $x\le y$ we have the estimate
\begin{multline*}
    \int_x^y\Re \lambda(s)ds\le\int_x^y|\Re \lambda(s)|ds
    \\
    \le\int_0^{\infty}|\Re \lambda(s)|ds\le\int_0^{\infty}\Re \lambda(s)ds+2M,
\end{multline*}
the exponent under the integral in \eqref{eq integral equation for
u-2} is bounded. The solution $u_1(t)$ is also bounded in this
case due to Lemma \ref{lem Levinson estimate}. Hence the integral
in \eqref{eq integral equation for u-2} converges as
$x\rightarrow\infty$ and there exists
\begin{equation*}
    \lim_{x\rightarrow\infty}u_2(x)=u_1(0)+\int_0^{\infty}e^{-\int_0^t\Lambda(s)ds}R(t)u_1(t)dt.
\end{equation*}
Returning to $u_1$ we obtain the answer.

Asymptotics 2. Consider the function $u_3$ given by \eqref{eq u-3}
and the corresponding equation \eqref{eq integral equation for
u-3}. It follows from Lemma \ref{lem Levinson estimate}  that
$u_3(t)$ is bounded, and hence Lebesgue's dominated convergence
theorem implies that the following limit exists
\begin{equation*}
    \lim_{x\rightarrow\infty}u_3(x)=
    \left(%
    \begin{array}{cc}
    1 & 0 \\
    0 & 0 \\
    \end{array}%
    \right)
    \l[u_1(0)+\int_0^{\infty}R(t)u_3(t)dt\r].
\end{equation*}
This is equivalent to the announced asymptotics for $u_1$.
\end{proof}

\section{Asymptotics for the solution $\varphi$ and Weyl-Titchmarsh type formula}

In this section, we obtain the asymptotics for the solution
$\varphi_{\alpha}(x,\lambda)$ and prove the Weyl-Titchmarsh type
formula for the operator $\mathcal L_{\alpha}$. Consider the set
\begin{equation*}
    U(\beta):=\bigcup_{\mu\in\sigma(\mathcal L_{per})\backslash\{\lambda_n,\mu_n,\lambda_n^+,\lambda_n^-,n\ge0\}}
    U(\beta,\mu)
\end{equation*}
that belongs to $\overline{\mathbb C_+}$ and contains
\begin{equation*}
    \sigma(\mathcal L_{per})\backslash\{\lambda_n,\mu_n,\lambda_n^+,\lambda_n^-,n\ge0\}
\end{equation*}
as a part of its boundary. The number $\beta$ is arbitrary here.

\begin{thm}\label{thm asymptotics}
    Let $\frac{2a\omega}{\pi}\notin\mathbb Z$ and $q_1\in L_1(\mathbb
    R_+)$, then the
    solution $\varphi_{\alpha}$ of the Cauchy problem
    \begin{equation*}
    \begin{array}{l}
    -\varphi_{\alpha}''(x,\lambda)
    +\l(q(x)+\frac{c\sin(2\omega x+\delta)}{(x+1)^{\gamma}}+q_1(x)\r)\varphi_{\alpha}(x,\lambda)
    =\lambda\varphi_{\alpha}(x,\lambda),
    \\
    \varphi_{\alpha}(0,\lambda)=\sin\alpha,
    \\
    \varphi'_{\alpha}(0,\lambda)=\cos\alpha
    \end{array}
    \end{equation*}
    has the following asymptotics.
     For every $\lambda\in U(\beta)$ there exists $A_{\alpha}(\lambda)$ such that
    \begin{enumerate}
    \item If $\lambda\in\mathbb C_+\cap U(\beta)$, then
    \begin{equation*}
    \begin{array}{l}
    \varphi_{\alpha}(x,\lambda)=A_{\alpha}(\lambda)\psi_-(x,\lambda)+o\l(e^{\Im  k(\lambda)\frac xa}\r),
    \\
    \varphi_{\alpha}'(x,\lambda)=A_{\alpha}(\lambda)\psi_-'(x,\lambda)+o\l(e^{\Im  k(\lambda)\frac xa}\r)
    \end{array}
    \end{equation*}
    as $x\rightarrow\infty$.
    \item If $\lambda\in\sigma(\mathcal L_{per})\backslash\{\lambda_n,\mu_n,\lambda_n^+,\lambda_n^-,n\ge0\}$,
    then
    \begin{equation*}
    \begin{array}{l}
    \varphi_{\alpha}(x,\lambda)=A_{\alpha}(\lambda)\psi_-(x,\lambda)+
    \overline{A_{\alpha}(\lambda)}\psi_+(x,\lambda)+o(1),
    \\
    \varphi_{\alpha}'(x,\lambda)=A_{\alpha}(\lambda)\psi_-'(x,\lambda)+
    \overline{A_{\alpha}(\lambda)}\psi_+'(x,\lambda)+o(1)
    \end{array}
    \end{equation*}
    as $x\rightarrow\infty$.
    \end{enumerate}

    The function $A_{\alpha}$ is analytic in the interior of $U(\beta)$ and has boundary values on
    \begin{equation*}
        \sigma(\mathcal L_{per})\backslash\{\lambda_n,\mu_n,\lambda_n^+,\lambda_n^-,n\ge0\}.
    \end{equation*}
\end{thm}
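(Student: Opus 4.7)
The plan is to apply the Levinson-type results of Section 4 to the reduced system \eqref{eq system for widetilde-v} for the vector $\widetilde v$ built in Section 3, and then unwind the substitutions \eqref{eq u}, \eqref{eq v}, \eqref{eq definition of widetilde-v} to transfer the asymptotics back to $\varphi_{\alpha}$. I fix an arbitrary $\mu\in\sigma(\mathcal L_{per})\setminus\{\lambda_n,\mu_n,\lambda_n^+,\lambda_n^-,n\ge0\}$ and work on $U(\beta,\mu)$; the full statement on $U(\beta)$ then follows by taking the union over $\mu$. The initial value $\widetilde v(0,\lambda)=e^{-Q(0,\lambda,\mu)}u(0,\lambda)$ is determined by $\sin\alpha,\cos\alpha$ and by $\psi_\pm,\psi_\pm'$ at $x=0$, so it depends analytically on $\lambda$. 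To apply Lemma \ref{lem Levinson estimate} I need $R^{(2)}\in L_1$, which is Lemma \ref{lem remainder}, and the one-sided bound $\int_x^y\Re\nu(t,\lambda)dt\ge-M$. Writing $\nu(x,\lambda)=-ik(\lambda)/a+\rho(x,\lambda)$, I note that on the real spectrum $p_+p_-=|p_-|^2$ is real and $W\in i\R$, hence $\Re\rho\equiv0$ and $\Re\nu\equiv0$. On $\C_+$ one has $\Re(-ik/a)=\Im k(\lambda)/a>0$, and the contribution $\int_x^y\Re\rho\,dt$ is uniformly bounded by expanding $p_+(\cdot,\lambda)p_-(\cdot,\lambda)$ in a Fourier series and integrating by parts as in Lemma \ref{lem estimate of the tail}; the assumption $\frac{2a\omega}{\pi}\notin\mathbb Z$ keeps the relevant denominators uniformly bounded away from zero on $U(\beta,\mu)$.

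I then apply Lemma \ref{lem Levinson asymptotics}. For $\lambda\in\C_+\cap U(\beta,\mu)$ we have $\int_0^\infty\Re\nu=+\infty$ (hyperbolic case), yielding
\begin{equation*}
\widetilde v_1(x)=e^{\int_0^x\nu(s)ds}(C_1(\lambda)+o(1)),\qquad \widetilde v_2(x)=o\!\left(e^{\int_0^x\nu(s)ds}\right),
\end{equation*}
where $C_1$ is the first component of $\widetilde v(0)+\int_0^\infty e^{-\int_0^t\nu}R^{(2)}\widetilde v\,dt$. Since Theorem \ref{thm about Q} gives $\|e^{\pm Q}-I\|=O((x+1)^{-\gamma})$, we have $v=\widetilde v+O((x+1)^{-\gamma})\widetilde v$; the factor $e^{ikx/a}$ in $u_1=e^{ikx/a}v_1$ cancels the $e^{-ikx/a}$ coming from $\int\nu$, producing
\begin{equation*}
u_1(x)\longrightarrow\exp\!\Big(\int_0^\infty\rho(s,\lambda)ds\Big)\, C_1(\lambda)=:A_{\alpha}(\lambda),
\end{equation*}
while $u_2\psi_+=o(e^{\Im k\,x/a})$ because $v_2=o(e^{\int\nu})$ and $|e^{\int\nu}|=O(e^{\Im k\,x/a})$. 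Combined with $\varphi_{\alpha}=\psi_-u_1+\psi_+u_2$ and the analogous identity for $\varphi_{\alpha}'$, this yields case 1 of the theorem.

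For $\lambda$ in the real spectrum, $\int_0^\infty\Re\nu=0$ places us in the elliptic case, so each $\widetilde v_j(x)e^{\mp\int_0^x\nu}$ has a finite limit. The conjugation identities \eqref{eq conjugation property of R-(2)}, together with $\overline{\nu(x,\mu)}=-\nu(x,\mu)$ and the matching symmetry of $Q(x,\mu,\mu)$ read off from \eqref{eq formula for Q on the real line} (using $\psi_+=\overline{\psi_-}$ and $W\in i\R$), imply that the map sending a solution $\widetilde v$ of \eqref{eq system for widetilde-v} to $(\overline{\widetilde v_2},\overline{\widetilde v_1})^T$ preserves the system. Since $\varphi_{\alpha}$ is real and $\psi_+=\overline{\psi_-}$, uniqueness of the decomposition $(\varphi_{\alpha},\varphi_{\alpha}')^T=u_1(\psi_-,\psi_-')^T+u_2(\psi_+,\psi_+')^T$ forces $u_2=\overline{u_1}$, whence $u_2(x)\to\overline{A_{\alpha}(\lambda)}$ and case 2 of the theorem follows. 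Analyticity of $A_{\alpha}$ on the interior of $U(\beta,\mu)$ and continuity of its boundary values on the real spectrum come from the analytic dependence of the initial data combined with the locally uniform convergence of all defining integrals, which is secured by Lemma \ref{lem Levinson estimate}, Theorem \ref{thm about Q}, and Weierstrass' theorem; patching over $\mu$ gives the full statement on $U(\beta)$. The principal technical obstacle I expect is the uniform bound on the oscillatory integral $\int_x^y\Re\rho(t,\lambda)dt$ on $\C_+\cap U(\beta,\mu)$, since absolute integrability of $\rho$ fails when $\gamma\le 1$, together with the careful bookkeeping of the conjugation symmetries that produces the precise pairing $A_{\alpha}\psi_-+\overline{A_{\alpha}}\psi_+$ on the real spectrum.
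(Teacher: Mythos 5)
Your proposal is correct and follows essentially the same route as the paper: reduce to the Levinson-form system via $Q$, verify the dichotomy condition by the oscillatory Fourier estimate (using $\frac{2a\omega}{\pi}\notin\mathbb Z$), apply the hyperbolic/elliptic cases of the asymptotic lemma, and use the reality of $\varphi_{\alpha}$ together with $\psi_+=\overline{\psi_-}$ to obtain the pairing $A_{\alpha}\psi_-+\overline{A_{\alpha}}\psi_+$ on the spectrum. Your derivation of $u_2=\overline{u_1}$ pointwise is a slightly more direct way to get the conjugate coefficient than the paper's propagation of the conjugation symmetry through $Q$ and $R^{(2)}$, but the substance is the same.
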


\begin{proof}
We are going to omit the index $\alpha$ since the value of the
boundary parameter is fixed throughout this proof. According to
\eqref{eq u} and \eqref{eq v} we write
\begin{equation}\label{eq v-phi}
    \left(%
    \begin{array}{c}
    \varphi(x) \\
    \varphi'(x) \\
    \end{array}%
    \right)
    =
    \left(%
    \begin{array}{cc}
    \psi_-(x,\lambda) & \psi_+(x,\lambda) \\
    \psi_-'(x,\lambda) & \psi_+'(x,\lambda) \\
    \end{array}%
    \right)
    \left(%
    \begin{array}{cc}
    e^{ik(\lambda)\frac xa} & 0 \\
    0 & e^{-ik(\lambda)\frac xa} \\
    \end{array}%
    \right)
    v_{\varphi}(x,\lambda).
\end{equation}
This is the definition of $v_{\varphi}$, a solution of \eqref{eq system for v}
corresponding to $\varphi$. Let us fix the point
\begin{equation*}
    \mu\in\sigma(\mathcal L_{per})\backslash\{\lambda_n,\mu_n,\lambda_n^+,\lambda_n^-,n\ge0\}
\end{equation*}
and consider $\lambda\in U(\beta,\mu)$. The function
\begin{equation*}
    \widetilde v_{\varphi}(x,\lambda,\mu):=e^{-Q(x,\lambda,\mu)}v_{\varphi}(x,\lambda),
\end{equation*}
is a solution to \eqref{eq system for widetilde-v} corresponding
to $\varphi$. Let us see that conditions of Lemma \ref{lem
Levinson estimate} are satisfied for the system \eqref{eq system
for widetilde-v} uniformly with respect to $\lambda\in
U(\beta,\mu)$. First of all we have estimate \eqref{eq condition
of summability} from Lemma \ref{lem remainder} and
\begin{equation*}
    \Re \nu(x,\lambda)
    =\frac{\Im  k(\lambda)}a-\Re \l(\frac{c\sin(2\omega x+\delta)p_+(x,\lambda)p_-(x,\lambda)}
    {(x+1)^{\gamma}W(\psi_+(\lambda),\psi_-(\lambda))}\r).
\end{equation*}
Estimating the second term in the same way as in Theorem \ref{thm about Q} we have:
\begin{multline*}
    \l|\int_x^y\Re \frac{c\sin(2\omega t+\delta)p_+(t,\lambda)p_-(t,\lambda)}
    {(t+1)^{\gamma}W(\psi_+(\lambda),\psi_-(\lambda))}dt\r|
    \le\frac{|c|a}{\pi|W(\psi_+(\lambda),\psi_-(\lambda))|}
    \\
    \times
    \l(\sum_{n=-\infty}^{\infty}|\widetilde b_n(\lambda)|
    \l(\frac1{\l|\frac{2a\omega}{\pi}+2n\r|}+\frac1{\l|\frac{2a\omega}{\pi}-2n\r|}\r)\r)
    \frac1{(x+1)^{\gamma}},
\end{multline*}
where
\begin{equation*}
    \widetilde b_n(\lambda):=\frac1a\int_0^ap_+(x,\lambda)p_-(x,\lambda)e^{-2\pi in\frac xa}dx
\end{equation*}
are Fourier coefficients for
$p_+(\cdot,\lambda)p_-(\cdot,\lambda)$. Analogously to \eqref{eq
estimate of b-n} we have:
\begin{equation*}
    |\widetilde b_n(\lambda)|\le\frac a{4\pi^2n^2}\int_0^a|(\psi_+(x,\lambda)\psi_-(x,\lambda))''|dx.
\end{equation*}
So there exists $c_6(\beta,\mu)$ such that for every $\lambda\in U(\beta,\mu)$ and $n\neq0$
\begin{equation*}
    |\widetilde b_n(\lambda)|\le\frac{c_6(\beta,\mu)}{n^2},
\end{equation*}
while
\begin{equation*}
    |\widetilde b_0(\lambda)|\le c_6(\beta,\mu).
\end{equation*}
Eventually there exists $c_7(\beta,\mu)$ such that
\begin{equation*}
    \l|\int_x^y\Re \frac{c\sin(2\omega t+\delta)p_+(t,\lambda)p_-(t,\lambda)}
    {(t+1)^{\gamma}W(\psi_+(\lambda),\psi_-(\lambda))}dt\r|
    \le c_7(\beta,\mu)
\end{equation*}
for every $0\le x\le y$ and $\lambda\in U(\beta,\mu)$. Thus we can take
\begin{equation*}
    M(\lambda)\equiv c_7(\beta,\mu)
\end{equation*}
for these values of $\lambda$. Lemma \ref{lem Levinson estimate} gives the estimate
\begin{equation}\label{eq estimate of widetilde-v-phi}
    \|\widetilde v_{\varphi}(x,\lambda,\mu)\|\le\|\widetilde v_{\varphi}(0,\lambda,\mu)\|
    e^{\Im  k(\lambda)\frac xa}c_8(\beta,\mu),
\end{equation}
where
\begin{multline*}
    c_8(\beta,\mu):=\sqrt{1+e^{4c_7(\beta,\mu)}}
    \\
    \times
    \exp\l(\sqrt{1+e^{4c_7(\beta,\mu)}}\max\limits_{\lambda\in U(\beta,\mu)}
    \int_0^{\infty}\|R^{(2)}(t,\lambda)\|dt\r).
\end{multline*}
Conditions of Lemma \ref{lem Levinson asymptotics} are also
satisfied: \eqref{eq condition Levinson elliptic} holds for
$\lambda\in\mathbb R\cap U(\beta,\mu)$ and \eqref{eq condition
Levinson hyperbolic} holds for $\lambda\in\mathbb C_+\cap
U(\beta,\mu)$. So Lemma \ref{lem Levinson asymptotics} gives the
following asymptotics: \begin{itemize} \item for
$\lambda\in\mathbb C_+\cap U(\beta,\mu)$,
\begin{multline*}
    \widetilde v_{\varphi}(x,\lambda,\mu)
    =e^{-ik(\lambda)\frac xa-\int_0^x\frac{c\sin(2\omega t+\delta)p_+(t,\lambda)p_-(t,\lambda)dt}
    {(t+1)^{\gamma}W(\psi_+(\lambda),\psi_-(\lambda))}}
    \l[
    \left(%
    \begin{array}{cc}
    1 & 0 \\
    0 & 0 \\
    \end{array}%
    \right)
    \l(
    \widetilde v_{\varphi}(0,\lambda,\mu)
    \r.
    \r.
    \\
    \l.
    \l.
    +\int_0^{\infty}
    e^{ik(\lambda)\frac ta+\int_0^t\frac{c\sin(2\omega s+\delta)p_+(s,\lambda)p_-(s,\lambda)ds}
    {(s+1)^{\gamma}W(\psi_+(\lambda),\psi_-(\lambda))}}
    R^{(2)}(t,\lambda,\mu)\widetilde v_{\varphi}(t,\lambda,\mu)dt
    \r)
    +o(1)\r],
\end{multline*}
\item for $\lambda=\mu$,
\begin{multline}\label{eq asymptotics of widetilde-v-phi}
    \widetilde v_{\varphi}(x,\mu,\mu)
    =
    \left(%
    \begin{array}{l}
    e^{-ik(\mu)\frac xa-\int_0^x\frac{c\sin(2\omega t+\delta)p_+(t,\mu)p_-(t,\mu)dt}
    {(t+1)^{\gamma}W(\psi_+(\mu),\psi_-(\mu))}}
    \ \ \ \
    0
    \\
    \ \ \
    0
    \ \ \ \ \
    e^{ik(\mu)\frac xa+\int_0^x\frac{c\sin(2\omega t+\delta)p_+(t,\mu)p_-(t,\mu)dt}
    {(t+1)^{\gamma}W(\psi_+(\mu),\psi_-(\mu))}}
    \\
    \end{array}%
    \right)
    \\
    \times
    \l[
    \widetilde v_{\varphi}(0,\mu,\mu)
    +
    \int_0^{\infty}
    \left(%
    \begin{array}{l}
    e^{ik(\mu)\frac ta+\int_0^t\frac{c\sin(2\omega s+\delta)p_+(s,\mu)p_-(s,\mu)ds}
    {(s+1)^{\gamma}W(\psi_+(\mu),\psi_-(\mu))}}
    \ \ \ \ \ \
    0
    \\
    \ \ \
    0
    \ \ \ \ \
    e^{-ik(\mu)\frac ta-\int_0^t\frac{c\sin(2\omega s+\delta)p_+(s,\mu)p_-(s,\mu)ds}
    {(s+1)^{\gamma}W(\psi_+(\mu),\psi_-(\mu))}}
    \\
    \end{array}%
    \right)
    \r.
    \\
    \l.
    \times
    R^{(2)}(t,\mu,\mu)\widetilde v_{\varphi}(t,\mu,\mu)dt+o(1)\r].
\end{multline}
\end{itemize}

Since $Q(x,\lambda,\mu)=O\l(\frac1{(x+1)^{\gamma}}\r)$, we can denote for $\lambda\in U(\beta,\mu)$:
\begin{multline*}
    A(\lambda,\mu):=
    \l<
    \left(%
    \begin{array}{c}
    1 \\
    0 \\
    \end{array}%
    \right),
    e^{-\int_0^{\infty}\frac{c\sin(2\omega t+\delta)p_+(t,\lambda)p_-(t,\lambda)dt}{(t+1)^{\gamma}W(\psi_+(\lambda),\psi_-(\lambda))}}
    \l[
    e^{-Q(0,\lambda,\mu)}v_{\varphi}(0,\lambda)
    \r.
    \r.
    \\
    \l.
    \l.
    +\int_0^{\infty}
    e^{ik(\lambda)\frac ta+\int_0^t\frac{c\sin(2\omega s+\delta)p_+(s,\lambda)p_-(s,\lambda)ds}
    {(s+1)^{\gamma}W(\psi_+(\lambda),\psi_-(\lambda))}}
    R^{(2)}(t,\lambda,\mu)e^{Q(t,\lambda,\mu)}v_{\varphi}(t,\lambda)dt\r]
    \r>
\end{multline*}
(where $<\cdot,\cdot>$ stands for the scalar product in $\mathbb C^2$) and have:
\begin{equation}\label{eq limit in hyperbolic case for v-phi}
    \lim_{x\rightarrow\infty}v_{\varphi}(x,\lambda)e^{ik(\lambda)\frac xa}
    =
    \left(%
    \begin{array}{c}
    A(\lambda,\mu) \\
    0 \\
    \end{array}%
    \right).
\end{equation}
From this we see that the coefficient $A(\lambda,\mu)$ does not depend on $\mu$,
so we will denote it by $A(\lambda)$. Relation \eqref{eq v-phi} can be written as
\begin{equation}\label{eq v-phi 2 time}
    v_{\varphi}(x,\lambda)=\frac1{W(\psi_+(\lambda),\psi_-(\lambda))}
    \left(%
    \begin{array}{c}
    \psi_+'(x,\lambda)\varphi(x,\lambda)-\psi_+(x,\lambda)\varphi'(x,\lambda) \\
    \varphi'(x,\lambda)\psi_-(x,\lambda)-\varphi(x,\lambda)\psi_-'(x,\lambda) \\
    \end{array}%
    \right),
\end{equation}
so $v_{\varphi}(x,\cdot)$ is analytic in $\mathbb C_+$ and continuous up to
\begin{equation*}
    \sigma(\mathcal L_{per})\backslash\{\lambda_n,\mu_n,n\ge0\}.
\end{equation*}
From the estimate \eqref{eq estimate of widetilde-v-phi} and properties of $Q(x,\lambda,\mu)$
and $R^{(2)}(x,\lambda,\mu)$ given by Theorem \ref{thm about Q} and Lemma \ref{lem remainder}
it follows that $A(\lambda)$ is continuous in $U(\beta,\mu)$ and analytic in its interior.
Thus $A$ is analytic in the interior of $U(\beta)$ having non-tangential boundary limits
on
\begin{equation*}
    \sigma(\mathcal L_{per})\backslash\{\lambda_n,\mu_n,\lambda_n^+,\lambda_n^-,n\ge0\}.
\end{equation*}
that coincide with its values on this set.

The solution $\varphi(x,\lambda)$ and its derivative are real if
$\lambda$ is real. Thus \eqref{eq v-phi 2 time} shows that the
upper and the lower components of the vector
$v_{\varphi}(x,\lambda)$ are complex conjugate for
$\lambda\in\sigma(\mathcal
L_{per})\backslash\{\lambda_n,\mu_n,n\ge0\}$. This property is
preserved if we multiply the vector by a matrix $X$ such that
\begin{equation*}
    X_{21}=\overline{X_{12}},\ X_{22}=\overline{X_{11}},
\end{equation*}
like \eqref{eq conjugation property of R-(2)}. It follows from
Lemma \ref{lem remainder} that the upper and the lower components
of the vectors in the equality \eqref{eq asymptotics of
widetilde-v-phi} are complex conjugate to each other. Hence for
$\lambda=\mu$ we have
\begin{equation}\label{eq asymptotics of v-phi in elliptic case}
    v_{\varphi}(x,\mu)=
    \left(%
    \begin{array}{c}
    A(\mu)e^{-ik(\mu)\frac xa} \\
    \overline{A(\mu)}e^{ik(\mu)\frac xa} \\
    \end{array}%
    \right)
    +o(1)\as x\rightarrow\infty.
\end{equation}
The asymptotics of the solution $\varphi$ and its derivative
follows from \eqref{eq v-phi}, \eqref{eq limit in hyperbolic case
for v-phi} and \eqref{eq asymptotics of v-phi in elliptic case}.
\end{proof}

Using the obtained asymptotics both on the spectrum and in
$\mathbb C_+$ we now prove the Weyl-Titchmarsh type formula.

    \begin{thm}\label{thm Weyl-Titchmarsh formula}
    Let $\frac{2a\omega}{\pi}\notin\mathbb Z$ and $q_1\in L_1(\mathbb R_+)$,
    then for almost all $\lambda\in\sigma(\mathcal
    L_{per})$ the spectral density of the operator $\mathcal
    L_{\alpha}$,
    defined
    by \eqref{eq L-alpha}, is given by
    \begin{equation*}
        \rho'_{\alpha}(\lambda)=\frac1{2\pi|W(\psi_+(\lambda),\psi_-(\lambda))||A_{\alpha}(\lambda)|^2},
    \end{equation*}
    where $A_{\alpha}$ is the same as in Theorem \ref{thm asymptotics}.
    \end{thm}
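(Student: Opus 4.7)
The plan is to identify the Weyl--Titchmarsh $m$-function $m_\alpha(\lambda)$ of $\mathcal L_\alpha$ with an explicit expression in terms of $A_\alpha$, and then apply the classical Titchmarsh--Kodaira identity $\rho'_\alpha(\lambda)=\frac{1}{\pi}\Im m_\alpha(\lambda+i0)$, valid for almost every $\lambda$ in the absolutely continuous spectrum of $\mathcal L_\alpha$, which by the result of \cite{Kurasov-Naboko-07} coincides with $\sigma(\mathcal L_{per})$. Since the potential in \eqref{eq L-alpha} is bounded plus an $L_1$ term, $\mathcal L_\alpha$ is in the limit point case at infinity, so for $\lambda\in\C_+$ the $m$-function is uniquely determined by the requirement that $\theta_\alpha+m_\alpha\varphi_\alpha\in L_2(\R_+)$. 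Here $\theta_\alpha$ denotes the solution of the same Cauchy problem with the complementary initial data $\theta_\alpha(0)=\cos\alpha$, $\theta_\alpha'(0)=-\sin\alpha$; with the Wronskian convention $W(f,g)=f'g-fg'$ used in \eqref{eq u} one verifies $W(\theta_\alpha,\varphi_\alpha)=-1$.

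First I would extend Theorem \ref{thm asymptotics} from $\varphi_\alpha$ to $\theta_\alpha$. The whole construction of Section~3 --- the substitutions \eqref{eq u}, \eqref{eq v}, the Harris--Lutz conjugation \eqref{eq definition of widetilde-v}, and the Levinson system \eqref{eq system for widetilde-v} --- depends only on the differential equation, not on the particular initial data propagated. Running the proof of Theorem \ref{thm asymptotics} with $\varphi_\alpha$ replaced by $\theta_\alpha$ therefore yields an analogous coefficient $B_\alpha(\lambda)$, analytic in the interior of $U(\beta)$ with continuous boundary values on $\sigma(\mathcal L_{per})\setminus\{\lambda_n,\mu_n,\lambda_n^\pm,n\ge0\}$. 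Equivalently, one may note that $\theta_\alpha=\varphi_{\alpha+\pi/2}$ (modulo an overall sign reduction into $[0,\pi)$) and invoke Theorem \ref{thm asymptotics} with shifted $\alpha$.

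For $\lambda\in\C_+\cap U(\beta)$ the function $\psi_+$ decays like $e^{-\Im k(\lambda)x/a}$ and so lies in $L_2$ at infinity, whereas $\psi_-$ grows and does not. The Weyl solution $\theta_\alpha+m_\alpha\varphi_\alpha$ must therefore be proportional to $\psi_+$, so that the coefficient of $\psi_-$ in its Bloch expansion --- equal to $B_\alpha(\lambda)+m_\alpha(\lambda)A_\alpha(\lambda)$ by the extended asymptotics --- has to vanish. This gives
\[
m_\alpha(\lambda)=-\frac{B_\alpha(\lambda)}{A_\alpha(\lambda)},\qquad\lambda\in\C_+\cap U(\beta),
\]
and by the boundary continuity of $A_\alpha,B_\alpha$ the same identity extends to non-tangential boundary values $m_\alpha(\lambda_0+i0)$ at every $\lambda_0\in\sigma(\mathcal L_{per})\setminus\{\lambda_n,\mu_n,\lambda_n^\pm\}$ where $A_\alpha(\lambda_0)\ne0$; since $A_\alpha$ is analytic and nontrivial in the interior of $U(\beta)$, the set of points where $A_\alpha(\lambda_0)=0$ has Lebesgue measure zero.

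Finally, on the real spectrum the reality of $\theta_\alpha,\varphi_\alpha$ together with $\overline{\psi_-}=\psi_+$ upgrade the asymptotics of Theorem \ref{thm asymptotics}(2) to $\varphi_\alpha=A_\alpha\psi_-+\overline{A_\alpha}\psi_++o(1)$ and, analogously, $\theta_\alpha=B_\alpha\psi_-+\overline{B_\alpha}\psi_++o(1)$. Evaluating the constant Wronskian $W(\theta_\alpha,\varphi_\alpha)=-1$ in the Bloch basis as $x\to\infty$ yields
\[
-1=(\overline{B_\alpha}A_\alpha-B_\alpha\overline{A_\alpha})\,W(\psi_+,\psi_-)=2i\,\Im(A_\alpha\overline{B_\alpha})\,W(\psi_+,\psi_-),
\]
and since $W(\psi_+,\psi_-)\in i\R_+$ this forces $\Im(A_\alpha\overline{B_\alpha})=\frac{1}{2|W(\psi_+,\psi_-)|}$. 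A short algebraic manipulation then produces $\Im m_\alpha(\lambda+i0)=\Im(-B_\alpha/A_\alpha)=\Im(A_\alpha\overline{B_\alpha})/|A_\alpha|^2=\frac{1}{2|W(\psi_+,\psi_-)|\,|A_\alpha|^2}$, and dividing by $\pi$ delivers the Weyl--Titchmarsh formula. The main technical hurdle is the clean extension of Theorem \ref{thm asymptotics} to $\theta_\alpha$ (needed to obtain $B_\alpha$ with the right analyticity and boundary regularity) and, to a lesser extent, the verification of the Titchmarsh--Kodaira spectral-density formula for this class of operators --- both are standard adaptations but must be executed carefully, particularly in confirming that the boundary value $m_\alpha(\lambda_0+i0)$ indeed coincides with the limit of $-B_\alpha/A_\alpha$ almost everywhere.
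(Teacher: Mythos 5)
Your proposal is correct and follows essentially the same route as the paper: the authors also set $\theta_\alpha=\varphi_{\alpha+\pi/2}$ (so that your $B_\alpha$ is their $A_{\alpha+\pi/2}$, obtained directly from Theorem \ref{thm asymptotics} with shifted $\alpha$), identify $m_\alpha=-A_{\alpha+\pi/2}/A_\alpha$ from the limit-point property, invoke subordinacy theory for $\rho'_\alpha=\frac1\pi\Im m_\alpha(\lambda+i0)$, and evaluate the constant Wronskian in the Bloch basis at infinity to get $1=(\overline{A_\alpha}A_{\alpha+\pi/2}-A_\alpha\overline{A_{\alpha+\frac\pi2}})W(\psi_+,\psi_-)$, which is your identity up to the sign convention. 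The only cosmetic difference is that this Wronskian identity already shows $A_\alpha\neq0$ on the relevant set, so your separate measure-zero argument for the zeros of $A_\alpha$ is not needed.
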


\begin{proof}
In addition to $\varphi_{\alpha}$ consider another one solution of
\eqref{eq spectral equation 2 time}, to be denoted by
$\theta_{\alpha}:=\varphi_{\alpha+\frac{\pi}2}$, satisfying the
initial conditions
\begin{equation*}
    \theta_{\alpha}(0,\lambda)=\cos\alpha,\ \theta'_{\alpha}(0,\lambda)=-\sin\alpha.
\end{equation*}
The Wronskian of $\varphi_{\alpha}$ and $\theta_{\alpha}$ is equal
to one. Theorem \ref{thm asymptotics} yields for $\lambda\in
U(\beta)\cap\mathbb C_+$,
\begin{equation*}
    \theta_{\alpha}(x,\lambda)=A_{\alpha+\frac{\pi}2}(\lambda)\psi_-(x,\lambda)
    +o\l(e^{\Im  k(\lambda)\frac xa}\r)\as x\rightarrow\infty.
\end{equation*}
Since the operator $\mathcal L_{\alpha}$ is in the limit point case,
the combination
\begin{equation*}
    \theta_{\alpha}+m_{\alpha}\varphi_{\alpha}
\end{equation*}
belongs to $L_2(0,\infty)$ (where $m_{\alpha}$ is the Weyl
function for $\mathcal L_{\alpha}$). It has the asymptotics
\begin{equation*}
    \theta_{\alpha}(x,\lambda)+m_{\alpha}(\lambda)\varphi_{\alpha}(x,\lambda)
    =(A_{\alpha+\frac{\pi}2}(\lambda)+m_{\alpha}A_{\alpha}(\lambda))
    \psi_-(x,\lambda)+o(e^{\Im  k(\lambda)\frac xa}).
\end{equation*}
Therefore
\begin{equation*}
    m_{\alpha}(\lambda)=-\frac{A_{\alpha+\frac{\pi}2}(\lambda)}{A_{\alpha}(\lambda)}
\end{equation*}
for $\lambda\in U(\beta)\cap\mathbb C_+$ and
\begin{equation*}
    m_{\alpha}(\lambda+i0)=-\frac{A_{\alpha+\frac{\pi}2}(\lambda)}{A_{\alpha}(\lambda)}
\end{equation*}
for $\lambda\in\sigma(\mathcal
L_{per})\backslash\{\lambda_n,\mu_n,\lambda_n^+,\lambda_n^-,n\ge0\}$.
It follows from the subordinacy theory \cite{Gilbert-Pearson-87}
that the spectrum of $\mathcal L_{\alpha}$ on this set is purely
absolutely continuous and
\begin{equation}\label{eq rho}
    \rho'_{\alpha}(\lambda)=\frac1{\pi}\Im  m_{\alpha}(\lambda+i0)=
    \frac{A_{\alpha}(\lambda)\overline{A_{\alpha+\frac{\pi}2}(\lambda)}
    -\overline{A_{\alpha}(\lambda)}A_{\alpha+\frac{\pi}2}(\lambda)}
    {2\pi i|A_{\alpha}(\lambda)|^2}.
\end{equation}
Theorem \ref{thm asymptotics} yields for these values of $\lambda$:
\begin{equation*}
    \begin{array}{l}
    \theta_{\alpha}(x,\lambda)=A_{\alpha+\frac{\pi}2}(\lambda)\psi_-(x,\lambda)
    +\overline{A_{\alpha+\frac{\pi}2}(\lambda)}\psi_+(x,\lambda)+o(1),
    \\
    \theta'_{\alpha}(x,\lambda)=A_{\alpha+\frac{\pi}2}(\lambda)\psi'_-(x,\lambda)
    +\overline{A_{\alpha+\frac{\pi}2}(\lambda)}\psi'_+(x,\lambda)+o(1),
    \end{array}
\end{equation*}
as $x\rightarrow\infty$. Substituting these asymptotics and the
asymptotics of $\varphi_{\alpha}$ and $\varphi'_{\alpha}$ into the
expression for the Wronskian we get:
\begin{equation*}
    1=(\overline{A_{\alpha}(\lambda)}A_{\alpha+\frac{\pi}2}(\lambda)
    -A_{\alpha}(\lambda)\overline{A_{\alpha+\frac{\pi}2}(\lambda)})
    W(\psi_+(\lambda),\psi_-(\lambda))
\end{equation*}
(the term $o(1)$ cancels, since both sides are independent of 
$x$). Combining with \eqref{eq rho} we have
\begin{multline*}
    \rho'_{\alpha}(\lambda)
    =\frac1{-2\pi iW(\psi_+(\lambda),\psi_-(\lambda))|A_{\alpha}(\lambda)|^2}
    \\
    =\frac1{2\pi|W(\psi_+(\lambda),\psi_-(\lambda))||A_{\alpha}(\lambda)|^2},
\end{multline*}
which completes the proof.
\end{proof}

\section{Acknowledgements}
The second author expresses his deep gratitude to Professor
S.N.\,Naboko for his constant attention to this work and for many
fruitful discussions on the subject and also to the Mathematics
Department of Lund Institute of Technology for financial support
and hospitality. The work was supported by grants
RFBR-09-01-00515-a, INTAS-05-1000008-7883 and Swedish Research
Council 80525401.

\bibliographystyle{plain}

\begin{thebibliography}{10}

\bibitem{Albeverio-72}
S.~Albeverio.
\newblock {On bound states in the continuum of $N$-body systems and the virial
  theorem}.
\newblock {\em Ann. Physics}, 71:167--276, 1972.

\bibitem{Behncke-91-I}
H.~Behncke.
\newblock {Absolute continuity of hamiltonians with von Neumann Wigner
  Potentials I}.
\newblock {\em Proceedings of the American Mathematical Society}, 111:373--384,
  1991.

\bibitem{Behncke-91-II}
H.~Behncke.
\newblock {Absolute continuity of hamiltonians with von Neumann Wigner
  potentials II}.
\newblock {\em Manuscripta Mathematica}, 71(1):163--181, 1991.

\bibitem{Behncke-94}
H.~Behncke.
\newblock {The m-function for Hamiltonians with Wigner-von Neumann potentials}.
\newblock {\em Journal of Mathematical Physics}, 35(4):1445--1462, 1994.

\bibitem{Brown-Eastham-McCormack-98}
B.M. Brown, M.S.P. Eastham, and D.K.R. McCormack.
\newblock Absolute continuity and spectral concentration for slowly decaying
  potentials.
\newblock {\em Journal of Computational and Applied Mathematics}, 94:181--197,
  1998.
\newblock arXiv:math/9805025v1.

\bibitem{Buslaev-Matveev-70}
V.S. Buslaev and V.B. Matveev.
\newblock {Wave operators for the Schr{\"o}dinger equation with a slowly
  decreasing potential}.
\newblock {\em Theoretical and Mathematical Physics}, 2(3):266--274, 1970.

\bibitem{Capasso-et-al-92}
F.~Capasso, C.~Sirtori, J.~Faist, D.L. Sivco, S.N.G. Chu, and A.Y. Cho.
\newblock {Observation of an electronic bound state above a potential well}.
\newblock {\em Nature}, 358:565--567, 1992.

\bibitem{Coddington-Levinson-55}
E.A. Coddington and N.~Levinson.
\newblock {\em {Theory of ordinary differential equations}}.
\newblock McGraw-Hill, New York, 1955.

\bibitem{Eastham-73}
M.S.P. Eastham.
\newblock {\em {The spectral theory of periodic differential equations}}.
\newblock Edinburgh, 1973.

\bibitem{Gilbert-Pearson-87}
D.J. Gilbert and D.B. Pearson.
\newblock {On subordinacy and analysis of the spectrum of one-dimensional
  Schr\"odinger operators}.
\newblock {\em Journal of mathematical analysis and applications},
  128(1):30--56, 1987.

\bibitem{Harris-Lutz-75}
W.A. Harris and D.A. Lutz.
\newblock {Asymptotic integration of adiabatic oscillators}.
\newblock {\em J. Math. Anal. Appl.}, 51:76--93, 1975.

\bibitem{Hinton-Klaus-Shaw-91}
D.B. Hinton, M.~Klaus, and J.K. Shaw.
\newblock {Embedded half-bound states for potentials of Wigner-von Neumann
  type}.
\newblock {\em Proceedings of the London Mathematical Society}, 3(3):607--646,
  1991.

\bibitem{Klaus-91}
M.~Klaus.
\newblock {Asymptotic behavior of Jost functions near resonance points for
  Wigner-von Neumann type potentials}.
\newblock {\em Journal of Mathematical Physics}, 32:163--174, 1991.

\bibitem{Kurasov-92}
P.~Kurasov.
\newblock {Zero-range potentials with internal structures and the inverse
  scattering problem}.
\newblock {\em Letters in Mathematical Physics}, 25(4):287--297, 1992.

\bibitem{Kurasov-96}
P.~Kurasov.
\newblock {Scattering matrices with finite phase shift and the inverse
  scattering problem}.
\newblock {\em Inverse Problems}, 12(3):295--307, 1996.

\bibitem{Kurasov-Naboko-07}
P.~Kurasov and S.~Naboko.
\newblock {Wigner-von Neumann perturbations of a periodic potential: spectral
  singularities in bands}.
\newblock {\em Mathematical Proceedings of the Cambridge Philosophical
  Society}, 142(01):161--183, 2007.

\bibitem{Matveev-73}
V.B. Matveev.
\newblock {Wave operators and positive eigenvalues for a Schr{\"o}dinger
  equation with oscillating potential}.
\newblock {\em Theoretical and Mathematical Physics}, 15(3):574--583, 1973.

\bibitem{Matveev-Skriganov-72}
V.B. Matveev and M.M. Skriganov.
\newblock {Scattering problem for radial Schr{\"o}dinger equation with a slowly
  decreasing potential}.
\newblock {\em Theoretical and Mathematical Physics}, 10(2):156--164, 1972.

\bibitem{Wigner-von-Neumann-29}
J.~von Neumann and E.P. Wigner.
\newblock {{\"U}ber merkw{\"u}rdige diskrete Eigenwerte}.
\newblock {\em Z. Phys}, 30:465--467, 1929.

\end{thebibliography}

\end{document}